\definecolor{cmyk}{cmyk}{.0,.4,.9,.5}
\DeclareFontFamily{U}{rsfs}{\skewchar\font127 }
\DeclareFontShape{U}{rsfs}{m}{n}{%
   <5> <6> rsfs5
   <7> rsfs7
   <8> <9> <10> <10.95> <12> <14.4> <17.28> <20.74> <24.88> rsfs10
}{}
\DeclareSymbolFont{rsfs}{U}{rsfs}{m}{n} 
\DeclareSymbolFontAlphabet{\scr}{rsfs}					
\DeclareMathOperator{\id}{Id}
\DeclareMathOperator{\Hom}{Hom}
\renewcommand{\emph}{\textbf} 							
\newcommand{\hlink}[2]{\href{#1}{\texttt{#2}}} 			
\newcommand{\xqedhere}[2]{%
  \rlap{\hbox to#1{\hfil\llap{\ensuremath{#2}}}}}
\theoremstyle{plain}
\newtheorem{theorem}{Theorem}[section]					
\newtheorem{proposition}[theorem]{Proposition}
\newtheorem{definition}[theorem]{Definition}
\theoremstyle{definition} 
\newtheorem{example}[theorem]{Example}  
\numberwithin{equation}{section}  						
\title{\textbf{Involutive Weak Globular Higher Categories}\footnote{
This is a reformatted and corrected version, only for arXiv purposes, of a paper presented by the first author (on 03 June 2017) and already published in the Proceedings of the 22$^{\rm nd}$ Annual Meeting in Mathematics (AMM 2017), 2-4 June 2017, Department of Mathematics, Faculty of Science, 
Chiang Mai University, Chiang Mai, Thailand.}}
\author{\normalsize  
Paratat Bejrakarbum $^a$
Paolo Bertozzini $^b$  
\\  
\normalsize \textit{Department of Mathematics and Statistics, Faculty of Science and Technology,}
\\
\normalsize \textit{Thammasat University, Pathumthani 12121, Thailand} 
\\ 
\normalsize $^a$ e-mail: \texttt{paratat\_p35@hotmail.com}
\\
\normalsize $^b$ e-mail: \texttt{paolo.th@gmail.com}
}
\date{\normalsize{submitted: 18 April 2017 \quad presented/published: 03 June 2017 \quad arXiv version: 02 August 2017}}
\begin{document}

\maketitle 

\begin{abstract} \noindent 
We investigate the notion of involutive weak globular $\omega$-categories
via Jacque Penon's approach. In particular, we give the constructions of a free self-dual globular $\omega$-magma, of a free strict involutive globular 
$\omega$-category, over an $\omega$-globular set, and a contraction between them.
The monadic definition of involutive weak globular $\omega$-categories is given as usual via algebras for the monad induced by a certain adjunction.
In our case, the adjunction is obtained from the ``free functor'' that associates to every $\omega$-globular set the above contraction.
Some examples of involutive weak globular $\omega$-categories are also provided.

\medskip

\noindent
\emph{Keywords:} Higher Category, Involutive Category, Monad.  

\smallskip

\noindent
\emph{MSC-2010:} 					
					18D05,			
					18D99, 
					46M99. 			


\end{abstract}

\tableofcontents


\section{Introduction}\label{sec: intro}

Higher category theory is a subject that is currently receiving a lot of interest, with strong links not only with algebraic topology (where we can trace its origins), but with logic, computer science, foundations of mathematics, mathematical physics, general systems' theory and more 
(see~\cite{BS,L2,H} and for some speculative applications to relational quantum theory also~\cite{B}). 

Although higher category theory, was somehow implicitly present at the time of the very inception of the subject in the work of 
S.Eilenberg-S.Mac Lane~\cite{EM} (natural transformations are just an example of globular 2-arrows in a strict 2-category), 
strict \hbox{$n$-categories} where originally defined by C.Ehresmann, both in their cubical~\cite{E63} and globular versions~\cite{E65} and 
M.Kelly-S.Eilenberg's enriched categories~\cite{EK} allow an iterative construction of strict higher categories.  

Weak categories (categories where the algebraic axioms of associativity and unitality hold only up to higher-level isomorphism) formally appear in the definition of weak monoidal categories~\cite{Be1,M} (a monoidal category is essentially a 2-category with only one object) and in their 
``many-objects'' (horizontally categorified) version as J.B\'enabou's  bicategories~\cite{Be}. 

J.Roberts, the pioneer of application of category theory to the study of algebraic quantum field theory in physics, was apparently the first to consider strict globular $\omega$-categories (categories equipped with an infinite tower of higher morphisms and compositions)~\cite{R}. 
Strict cubical $\omega$-groupoids and categories appeared almost at the same time in a series of works by R.Brown-P.Higgins~\cite{BH}, motivated by their generalization of Seifert-Van Kampen theorem in algebraic topology (see the nice recent textbook~\cite{BHS} for details). 

A.Grothendieck~\cite{G} in his famous ``Pursuing Stacks'' manuscript (partially inspired by discussions with R.Brown and collaborators in Bangor) described strict globular $\omega$-categories and proposed the use of weak-$\omega$-groupoids as a way to capture the homotopy content of spaces.  
The actual definition of weak $n$-categories (for $n>2$ or $n=\omega$), starting with R.Street's definition of weak $\omega$-category based on the algebra of ``symplexes''~\cite{St}, has been a quite laborius (and still ongoing) process with several alternative partially equivalent definitions under discussion.\footnote{For a general background comparison we refer to the excellent introductions by T.Leinster~\cite{L2}, 
E.Cheng-A.Lauda~\cite{CL} and, for a quite useful  historical account of the complicated developments, to the bibliographical appendix contained in T.Leinster~\cite{L1}.} 

Algebraic definitions of weak gobular $\omega$-categories, based on suitable monads, have been developed by M.Batanin~\cite{Ba1,Ba2}, 
J.Penon~\cite{P}, T.Leinster~\cite{L2} and later C.Kachour~\cite{K} (see also the alternative view of G.Kondratiev~\cite{Ko}).  

The notion of strict involution in category theory (an involutive endo-functor) was apparently repeatedly rediscovered and utilized in different contexts, usually with additional structures in place, before being recently formalized through ``dagger categories'' by P.Selinger~\cite{S}. Strict involutions appear in the ``categories with involution'' (M.Burgin~\cite{Bu}, J.Lambek~\cite{La}, \dots) with a compatible ``order relation''; in ``allegories''  
(P.Freyd-A.Scedrov~\cite{FS}) where a further operation of ``intersection'' appears; in the definitions of ``$*$-category'' and ``$*$-algebroid'' in the literature on \hbox{C*-categories} starting with P.Ghez-R.Lima-J.Roberts~\cite{GLR} and P.Mitchener~\cite{Mi}, where involutions are supposed to be conjugate-linear on the $\Hom$-spaces; in the works on ``compact closed categories'' starting with S.Abramsky-B.Coecke~\cite{AC}. 

Involutions for strict globular $n$-categories (as involutive endofuctors that are covariant or contravariant for the several compositions) have been studied in~\cite{BCLS, B} (see also~\cite{BCL1,BCL2}) and for the case of strict double categories (strict cubical $2$-categories) in~\cite{BCM}. 
The study of weak forms of involutions in (higher) category theory had a more intricate evolution (that we will not investigate here) strongly linked with the study of ``dualizing objects'' and $*$-autonomous categories~\cite{BaW}. 

A notion of involutive weak monoidal category is contained in~\cite{BCL3} and an alternative definition was proposed by J.Egger~\cite{Eg}. 

\textit{
As a very first step towards a possible treatment of weak higher C*-categories, in the present work, our main purpose is to put forward a definition of \emph{involutive weak higher category} in the context of J.Penon's definition of weak globular $\omega$-category~\cite{P}. 
Possible immediate future extensions of this research will examine the notion of involutions for M.Batanin~\cite{Ba1} and T.Leinster~\cite{L2} algebraic approaches to higher categories as well. 
}

Here we proceed to describe in some detail the content of the paper. 

In section~\ref{sec: prelim} we briefly review the basic notions on strict higher categories that we need. 
In order to make immediate contact with the already available works on J.Penon's approach, we decide here to define strict higher categories via  ``higher quivers'', whose  definition is recalled in subsection~\ref{subsec: sghc}. 
Previous work on higher categories~\cite{BCLS,B} utilized an algebraic definition of strict higher categories via ``partial monoids on 
$n$-arrows''; a discussion of the categorical equivalence between the two descriptions has been done elsewhere~\cite{Pu,BP}.  
In this paper, we restrict our attention to the case of globular higher quivers and globular higher categories based on them.\footnote{The treatment of cubical higher categories will be the objective of a further separate investigation, as soon as a full study of strict involutive $n$-tuple categories is available, extending the previous work~\cite{BCM} for double categories.} 
Contrary to the treatment in~\cite{BCLS,B}, where only strict $n$-categories are considered, in subsection~\ref{subsec: sghc} we cover also the general case of strict globular $\omega$-categories. 

The definition of strict involutive \hbox{$n$-category} from~\cite{BCLS,B} is similarly extended to the case of strict involutive globular 
$\omega$-categories in subsection~\ref{subsec: inv cat}. 
We remark that also for our strict (involutive) globular $\omega$-categories it is perfectly possible to substitute the ``usual exchange'' axiom with the relaxed ``non-commutative exchange'' property proposed in~\cite{BCLS,B}.  

In order to fix the notation and to make the paper self-contained, monads and their algebras are defined in section~\ref{subsec: ama}. 
The essential features of J.Penon's construction are recalled in section~\ref{subsec: Penon}.  
We do not necessarily require our globular $\omega$-quivers to be initially reflexive (and this should avoid the already known problems described 
in~\cite{ChM}). 

The main subject of this work is in section~\ref{sec: main}.  An explicit definition and construction of free self-dual globular $\omega$-magmas and of free strict involutive globular $\omega$-categories over a globular $\omega$-quiver is presented in detail in subsections~\ref{subsec: sd quiver} and~\ref{subsec: inv cat} followed by a similar construction of the free ``involutive'' contraction over a globular $\omega$-quiver in \ref{subsec: w inv cat}. 
In subsection~\ref{subsec: w inv cat} we prove that the forgetful functor from the category $\mathscr{Q}_\omega^*$ of contractions (of involutive globular $\omega$-magmas over strict involutive globular $\omega$-categories) to the category of globular $\omega$-quivers admits a left-adjoint and we give the monadic definition of weak involutive globular $\omega$-categories as algebras for such monad. 
Some preliminary examples are presented in subsection~\ref{subsec: ex}.  

\section{Preliminaries}\label{sec: prelim}

We collect here the background definitions and results that are preliminary to our work. The main references are J.Penon~\cite{P}, 
T.Leinster~\cite{L2}, E.Cheng-A.Lauda~\cite{CL}. 

In our treatment here, we carefully separate the algebraic axioms (associativity, unitality, unital functoriality and exchange) from the ``structural requirements'' introduced via higher quivers, that are not a-priori reflexive. Reflexivity is considered as a nullary partial operation in parallel to the partial binary operation of composition. 

\subsection{Strict Globular Higher Categories}\label{subsec: sghc}

An \textbf{$\omega$-quiver} 
$Q^0 \overset{s^0}{\underset{t^0}{\leftleftarrows}} Q^1 \overset{s^1}{\underset{t^1}{\leftleftarrows}} \cdots
\overset{s^{n-2}}{\underset{t^{n-2}}{\leftleftarrows}} Q^{n-1}\overset{s^{n-1}}{\underset{t^{n-1}}{\leftleftarrows}} Q^n \overset{s^n}{\underset{t^n}{\leftleftarrows}} \cdots$ is an infinite family of sets $Q^k$ for $k\in \mathbb{N}_0$ equipped with infinite pairs of source and target maps $s^k,t^k:Q^{k+1}\rightrightarrows Q^k$ for $k\in \mathbb{N}_0$.
Elements of $Q^m$ are called \textbf{$m$-cells} of $Q$ and their ``shape'' is as follows: 
\begin{equation*}
\xy 
0;/r.22pc/:
(0,15)*{};
(0,-15)*{};
(0,8)*{}="A";
(0,-8)*{}="B";
{\ar@{=>}@/_1pc/ "A"+(-4,1) ; "B"+(-3,0)};
{\ar@{=}@/_1pc/ "A"+(-4,1) ; "B"+(-4,1)};
{\ar@{=>}@/^1pc/ "A"+(4,1) ; "B"+(3,0)};
{\ar@{=}@/^1pc/ "A"+(4,1) ; "B"+(4,1)};
{\ar@{}(-5.5,0)*{} ; (5.5,0)*{}|\dots}; 
(-15,0)*+{\bullet}="1";
(15,0)*+{\bullet}="2";
{\ar@/^2.75pc/ "1";"2"};
{\ar@/_2.75pc/ "1";"2"};
\endxy
\end{equation*}

An \textbf{$\omega$-globular set} is an $\omega$-quiver satisfying the globularity condition, i.e. $s^{k-1}s^k=s^{k-1}t^k$ and 
$t^{k-1}s^k=t^{k-1}t^k$ for all $k\in \mathbb{N}$.

An $\omega$-globular set 
$Q^0 \overset{s^0}{\underset{t^0}{\leftleftarrows}} Q^1 \overset{s^1}{\underset{t^1}{\leftleftarrows}} \cdots
\overset{s^{n-1}}{\underset{t^{n-1}}{\leftleftarrows}} Q^n \overset{s^n}{\underset{t^n}{\leftleftarrows}} \cdots$ is \textbf{reflexive} if there exists a family of maps 
$Q^0 \overset{\iota^0}{\rightarrow} Q^1 \overset{\iota^1}{\rightarrow} \cdots \overset{\iota^{n-1}}{\rightarrow}
Q^n \overset{\iota^n}{\rightarrow} \cdots$ such that $s^k\circ \iota^k=\id_{Q^k}=t^k\circ \iota^k$ for every $k\in \mathbb{N}_0$.

A \textbf{(reflexive) globular $\omega$-magma} is  a (reflexive) $\omega$-globular set equipped with a function \\ 
$\circ^m_p:Q^m\times_{Q^p} Q^m \rightarrow Q^m$ for each $0\leq p<m$, where 
$$Q^m\times_{Q^p} Q^m:=\{(x',x)\in Q^m\times Q^m \ | \ t^pt^{p+1}\cdots t^{m-1}(x)=s^ps^{p+1}\cdots s^{m-1}(x')\},$$ such that the following conditions hold: if $0\leq p<m$ and $(x',x)\in Q^m\times_{Q^p} Q^m$,
\begin{itemize}
\item $s^qs^{q+1}\cdots s^{m-1}(x'\circ^m_p x)=\left\{  \begin{array}{ll}
s^qs^{q+1}\cdots s^{m-1}(x')\circ^q_p s^qs^{q+1}\cdots s^{m-1}(x), & q>p; \\
s^qs^{q+1}\cdots s^{m-1}(x'), & q\leq p. \end{array} \right.$
\item $t^qt^{q+1}\cdots t^{m-1}(x'\circ^m_p x)=\left\{  \begin{array}{ll}
t^qt^{q+1}\cdots t^{m-1}(x')\circ^q_p t^qt^{q+1}\cdots t^{m-1}(x), & q>p; \\
t^qt^{q+1}\cdots t^{m-1}(x), & q\leq p. \end{array} \right.$
\end{itemize}
Here is a graphical rendering of $\circ^3_0, \circ^3_1,\circ^3_2$ for $3$-arrows: 
\begin{equation*}
\begin{tabular}{ccc}
\xy 0;/r.22pc/:
(0,10)*{};
(0,-10)*{};
(-14,0)*+{\bullet}="1";
(0,0)*+{\bullet}="2";
{\ar@/^1.33pc/ "1";"2"};
{\ar@/_1.33pc/ "1";"2"};
(14,0)*+{\bullet}="3";
{\ar@/^1.33pc/ "2";"3"};
{\ar@/_1.33pc/ "2";"3"};
(-6.75,4)*+{}="A";
(-6.75,-4)*+{}="B";
{\ar@{=>}@/_.5pc/ "A"+(-1.33,0) ; "B"+(-.66,-.55)};
{\ar@{=}@/_.5pc/ "A"+(-1.33,0) ; "B"+(-1.33,0)};
{\ar@{=>}@/^.5pc/ "A"+(1.33,0) ; "B"+(.66,-.55)};
{\ar@{=}@/^.5pc/ "A"+(1.33,0) ; "B"+(1.33,0)};
(6.75,4)*+{}="A";
(6.75,-4)*+{}="B";
{\ar@{=>}@/_.5pc/ "A"+(-1.33,0) ; "B"+(-.66,-.55)};
{\ar@{=}@/_.5pc/ "A"+(-1.33,0) ; "B"+(-1.33,0)};
{\ar@{=>}@/^.5pc/ "A"+(1.33,0) ; "B"+(.66,-.55)};
{\ar@{=}@/^.5pc/ "A"+(1.33,0) ; "B"+(1.33,0)};
{\ar@3{->} (-8,0)*{}; (-5.5,0)*{}};
{\ar@3{->} (5.5,0)*{}; (8,0)*{}};
\endxy
&
\xy 0;/r.22pc/:
(0,15)*{};
(0,-15)*{};
(0,9)*{}="A";
(0,1)*{}="B";
{\ar@{=>}@/_0.5pc/ "A"+(-2,1) ; "B"+(-1,0)};
{\ar@{=}@/_.5pc/ "A"+(-2,1) ; "B"+(-2,1)};
{\ar@{=>}@/^.5pc/ "A"+(2,1) ; "B"+(1,0)};
{\ar@{=}@/^.5pc/ "A"+(2,1) ; "B"+(2,1)};
{\ar@3{->} (-1.5,6)*{} ; (1.5,6)*{}};
(0,-1)*{}="A";
(0,-9)*{}="B";
{\ar@{=>}@/_.5pc/ "A"+(-2,-1) ; "B"+(-1,-1.5)};
{\ar@{=}@/_.5pc/ "A"+(-2,0) ; "B"+(-2,-.7)};
{\ar@{=>}@/^.5pc/ "A"+(2,-1) ; "B"+(1,-1.5)};
{\ar@{=}@/^.5pc/ "A"+(2,0) ; "B"+(2,-.7)};
{\ar@3{->} (-1.5,-5)*{} ; (1.5,-5)*{}};
(-15,0)*+{\bullet}="1";
(15,0)*+{\bullet}="2";
{\ar@/^2.75pc/ "1";"2"};
{\ar@/_2.75pc/ "1";"2"};
{\ar "1";"2"};
\endxy
&
\xy 0;/r.22pc/:
(0,15)*{};
(0,-15)*{};
(0,8)*{}="A";
(0,-8)*{}="B";
{\ar@{=>} "A"+(0,1.3) ; "B"};
{\ar@{=>}@/_1pc/ "A"+(-4,1) ; "B"+(-3,0)};
{\ar@{=}@/_1pc/ "A"+(-4,1) ; "B"+(-4,1)};
{\ar@{=>}@/^1pc/ "A"+(4,1) ; "B"+(3,0)};
{\ar@{=}@/^1pc/ "A"+(4,1) ; "B"+(4,1)};
{\ar@3{->} (-5.5,0)*{} ; (-2.5,0)*{}};
{\ar@3{->} (2.5,0)*{} ; (5.5,0)*{}};
(-15,0)*+{\bullet}="1";
(15,0)*+{\bullet}="2";
{\ar@/^2.75pc/ "1";"2"};
{\ar@/_2.75pc/ "1";"2"};
\endxy
\end{tabular}.
\end{equation*}

A \textbf{strict globular $\omega$-category} is a reflexive globular $\omega$-magma $\mathscr{C}$ such that:
\begin{enumerate}
\item (associativity) if $0\leq p<m$ and $x,y,z\in \mathscr{C}^m$ with 
\\
$(z,y),(y,x)\in \mathscr{C}^m\times_{\mathscr{C}^p}\mathscr{C}^m$, then
$(z\circ^m_py)\circ^m_px=z\circ^m_p(y\circ^m_px)$,
\item (unitality) if $0\leq p<m$ and $x\in \mathscr{C}^m$, then
$$\iota^{m-1}\cdots \iota^pt^p\cdots t^{m-1}(x)\circ^m_px=x=x\circ^m_p \iota^{m-1}\cdots \iota^ps^p\cdots   s^{m-1}(x),$$
\item (functoriality of identities) if $0\leq q<p$ and $(x',x)\in\mathscr{C}^p \times_{\mathscr{C}^q}\mathscr{C}^p$, then:
$$\iota^p(x')\circ^{p+1}_q \iota^p(x)=\iota^{p}(x'\circ^p_qx),$$ 
\item (binary exchange) if $0\leq q<p<m$ and $x,x',y,y'\in \mathscr{C}^m$ with
\\ 
$(y',y),(x',x)\in \mathscr{C}^m\times_{\mathscr{C}^p}\mathscr{C}^m$ and
$(y',x'),(y,x)\in \mathscr{C}^m\times_{\mathscr{C}^q}\mathscr{C}^m$, then
$$(y'\circ^m_py)\circ^m_q(x'\circ^m_px)=(y'\circ^m_qx')\circ^m_p(y\circ^m_qx), \quad 
\xymatrix{
\bullet \ruppertwocell{x} \rlowertwocell{x'} \ar[r] & \bullet \ruppertwocell{y} \rlowertwocell{y'} \ar[r] 
& \bullet 
}. 
$$
\end{enumerate}

A \textbf{covariant morphism of $\omega$-quivers} $Q,\hat{Q}$ is a family of maps
$\phi^n:Q^n\rightarrow \hat{Q}^n$ such that, for $n\in \mathbb{N}_0$,
$\hat{s}^n \circ \phi^{n+1}=\phi^n \circ s^n$ and $\hat{t}^n \circ \phi^{n+1}=\phi^n \circ t^n$.

A \textbf{covariant morphism of reflexive $\omega$-globular sets} $Q,\hat{Q}$ is a morphism of $\omega$-quivers such that, for 
$n\in \mathbb{N}_0$, $\hat{i}^n\circ \phi^n=\phi^{n+1}\circ i^n$.

A \textbf{covariant morphism of (reflexive) globular $\omega$-magmas} $M,\hat{M}$ is a morphism of (reflexive) \hbox{$\omega$-globular} sets such that 
$\phi^n(x\circ^n_q y)=\phi^n(x)\hat{\circ}^n_q \phi^n(y)$.
Such a morphism is called a \textbf{covariant \hbox{$\omega$-functor}} when $M$ and $\hat{M}$ are strict globular $\omega$-categories.

\subsection{Adjunctions, Monads, Algebras}\label{subsec: ama} 

To make the paper self-contained, we recall some well-known definitions in category theory.\footnote{For background in category theory, among several texts, see~\cite{Bo,BW,ML}.}

Let $\mathscr{C}$ and $\mathscr{D}$ be categories and $F:\mathscr{C} \rightarrow \mathscr{D}$ and $U:\mathscr{D} \rightarrow \mathscr{C}$ be functors.  We say that the functor $F$ is \textbf{left adjoint} to the functor $U$ or the functor $G$ is \textbf{right adjoint} to the functor $F$, denoted by $F\dashv U$ or $U\vdash F$, if there exist natural transformations $\eta:\id_\mathscr{D}\Rightarrow FU$ and 
$\epsilon:UF\Rightarrow \id_\mathscr{C}$ making commutative the following diagrams:
\begin{equation*}
\vcenter{\xymatrix{F \ar[r]^{\eta F} \ar[dr]_{1_F} & FUF \ar[d]^{F\epsilon}
\ar@{}[dl]|(0.35){\circlearrowleft} \\ & F}} \quad 
\vcenter{\xymatrix{U \ar[r]^{U\eta}
\ar[dr]_{1_U} & UFU \ar[d]^{\epsilon U} \ar@{}[dl]|(0.35){\circlearrowleft} \\ & U}} 
\quad \text{that is, $F\epsilon \circ \eta F=1_F$ and $\epsilon U\circ U\eta =1_U$.}
\end{equation*}

A \textbf{monad} $(T,\mu,\eta)$ on a category $\mathscr{C}$ consists of a functor $T:\mathscr{C} \rightarrow \mathscr{C}$ and natural transformations $\eta :1_\mathscr{C} \Rightarrow T$ (the \textbf{unit}) and $\mu :T^2 \Rightarrow T$ (the \textbf{multiplication}) such that the following diagrams commute:
\begin{equation*}
\xymatrix{T(X) \ar[dr]_{1_{T(X)}} \ar[r]^{T\eta_X} & T^2(X)
\ar@{}[dl]|(0.35){\circlearrowleft} \ar[d]_{\mu_X} \ar@{}[dr]|(0.35){\circlearrowleft}
& T(X) \ar[l]_{\eta_{T(X)}} \ar[dl]^{1_{T(X)}} \\ & T(X) & } 
\quad \quad 
\xymatrix{T^3(X)
\ar[d]_{T\mu_X} \ar[r]^{\mu_XT} \ar@{}[dr]|(0.5){\circlearrowleft} & T^2(X)
\ar[d]^{\mu_X} \\ T^2(X) \ar[r]_{\mu_X} & T(X)}
\end{equation*} 
that is, $\mu_X\circ T\eta_X=1_{T(X)}=\mu_X\circ \eta_{T(X)}$ and $\mu_X\circ
T\mu_X=\mu_X\circ \mu_XT$.

Every adjunction $F\dashv U$ with unit $\eta$ and counit $\epsilon$ gives rise to a unique monad $(UF,U\epsilon F,\eta)$.

Let $(T,\eta ,\mu )$ be a monad on a category $\mathscr{C}$.  An \textbf{algebra} for a monad $T$ consists of an object $A\in \mathscr{C}^0$ together with a morphism $TA \overset{\theta}{\rightarrow} A$ such that the following diagrams commute:
\begin{equation*}
\vcenter{\xymatrix{A \ar[r]^{\eta_A} \ar[dr]_{1_A} & TA \ar[d]^{\theta}
\ar@{}[dl]|(0.35){\circlearrowleft} \\ & A}} \quad  \quad 
\vcenter{\xymatrix{T^2A
\ar[d]_{T\theta} \ar[r]^{\mu_A} \ar@{}[dr]|(0.5){\circlearrowleft} & TA
\ar[d]^{\theta} \\ TA \ar[r]_{\theta} & A}}\quad 
\text{that is, $\theta \circ \eta_A =1_A$ and $\theta \circ T\theta = \theta \circ \mu_A$.}
\end{equation*}

\subsection{Penon Weak Higher Categories}\label{subsec: Penon}

Given an $\omega$-globular set $Q$, a reflexive globular $\omega$-magma $M$, with a morphism $\nu:Q\to M$ (as $\omega$-globular sets), is 
\textbf{free} over $Q$ if this universal factorization property holds: 
for every other morphism $\phi:Q\to \hat{M}$ (as $\omega$-globular sets) into another reflexive globular $\omega$-magma 
$\hat{M}$ there exists a unique morphism of reflexive globular $\omega$-magmas $\hat{\phi}:M\to \hat{M}$ such that $\phi = \hat{\phi}\circ \nu$.

Given an $\omega$-globular set $Q$, a strict globular $\omega$-category $C$, with a morphism $\nu:Q\to C$ (as $\omega$-globular sets), is 
\textbf{free} over $Q$ if this universal factorization property holds: 
for every other morphism $\phi:Q\to \hat{C}$ (as $\omega$-globular sets) into another strict globular $\omega$-category 
$\hat{C}$ there exists a unique morphism of strict globular $\omega$-categories $\hat{\phi}:C\to \hat{C}$ such that $\phi = \hat{\phi}\circ \nu$.

Note that free reflexive globular $\omega$-magmas (respectively, strict globular $\omega$-categories) over an \hbox{$\omega$-globular} set always exist (see~\cite{L2} and~\cite{P}) and, as for any definition via a universal factorization property, any two of them are canonically isomorphic.

Let $M$ be a reflexive globular $\omega$-magma, $C$ a strict globular $\omega$-category, and $\pi:M\to C$ a morphism of reflexive globular 
$\omega$-magmas.  A \textbf{Penon contraction for $\pi$} is a family of maps 
$[\cdot,\cdot]_q:\{(x,y)\in M^q\times M^q \ | \ s^{q-1}(x)=s^{q-1}(y), \ t^{q-1}(x)= t^{q-1}(y), \ \pi(x)=\pi(y)\}\to M^{q+1}$, for any 
$q\in \mathbb{N}$, satisfying the following three properties:
\begin{enumerate}
\item $s^q([x,y]_q)=x$ and $t^q([x,y]_q)=y$,
\item $x=y$ implies $[x,y]_q=\iota^q_M(x)=\iota^q_M(y)$,
\item $\pi([x,y]_q)=\iota^q_C(\pi(x))=\iota^q_C(\pi(y))$.
\end{enumerate}
Here below is a graphical depiction of Penon contractions: 
\begin{equation*}
\xymatrix{
\bullet \rrtwocell^x_y{\omit} & & \bullet 
\\ 
&  \ar@{|->}[dd]_{\pi} & 
\\ 
& &  
\\
& & 
\\
\bullet \ar[rr]_{\pi(x)=\pi(y)} & & \bullet
} \qquad \qquad  
\xymatrix{
\bullet \rrtwocell^x_y{\quad [x,y]} & & \bullet 
\\ 
&  \ar@{|->}[dd]_{\pi} & 
\\ 
& &  
\\
& & 
\\
\bullet \rrtwocell^{z=\pi(x)}_{z=\pi(y)}{\quad \iota(z)} & & \bullet 
}
\end{equation*}

We have a category of Penon contractions, where morphisms are defined as 
$$(M_1\overset{\pi_1}{\to}C_1, [\cdot,\cdot]^1)\xrightarrow{(\Phi,\phi)}(M_2\overset{\pi_2}{\to}C_2, [\cdot,\cdot]^2),$$ 
where $\Phi:M_1\to M_2$ is a morphism of reflexive globular $\omega$-magmas and $\phi:C_1\to C_2$ is an $\omega$-functor such that
$\pi_2\circ \phi=\Phi\circ \pi_1$ and $\Phi([x,y]^1_q)=[\Phi(x),\Phi(y)]^2_q$ for every $q\in \mathbb{N}$, $x,y$ in the domain of 
$[\cdot,\cdot]^1_q$.

There is a forgetful functor $U$ from the category of Penon contractions to the category $\mathscr{G}$ of \hbox{$\omega$-globular} sets associating to a contraction $(M\overset{\pi}{\to}C, [\cdot,\cdot])$ the underlying $\omega$-globular set of $M$.

J.Penon proved in~\cite{P} that $U$ admits a left adjoint functor $F\dashv U$ and gave the following: 

\begin{definition}
A \textbf{weak globular $\omega$-category} is an algebra for the monad $(UF,U\epsilon F,\eta)$.
\end{definition}

\section{Main Results}\label{sec: main} 

Our goal is a ``Penon's style'' treatment of self-dualities (involutions) for weak $\omega$-categories.  
Again we carefully distiguish the ``structural requirements'' in the definition of the unary operations of duality and the algebraic axioms necessary in the case of involutions. 
The material on self-dualities and strict involutive categories follows~\cite{BCLS,B} and is adapted/generalized to the case of $\omega$-quivers and 
$\omega$-magmas. 

\subsection{Self-Dual (Reflexive) Globular $\omega$-Quivers and $\omega$-Magmas}\label{subsec: sd quiver}

Let $\alpha \subseteq \mathbb{N}_0$. 
An \textbf{$\alpha$-contravariant morphism} $Q\xrightarrow{\phi}\hat{Q}$ of $\omega$-quivers or $\omega$-globular sets is a family of maps 
$\phi^n:Q^n\to\hat{Q}^n$ such that: 
\begin{itemize}
\item 
$\hat{s}^n \circ \phi^{n+1}=\phi^n \circ t^n, \quad \hat{t}^n \circ \phi^{n+1}=\phi^n \circ s^n,  \quad \forall n\in \alpha$; 
\item 
$\hat{s}^n \circ \phi^{n+1}=\phi^n \circ s^n, \quad \hat{t}^n \circ \phi^{n+1}=\phi^n \circ t^n,  \quad \forall n\notin \alpha$. 
\end{itemize}
For globular $\omega$-magmas, an $\alpha$-contravariant morphism must also satisfy: 
\begin{itemize}
\item 
$\phi^n(x\circ^n_p y)=\phi^n(y)\hat{\circ}^n_p\phi^n(x), \quad \forall n\in \alpha, \quad \forall (x,y)\in Q^n\times_pQ^n$, 
\item 
$\phi^n(x\circ^n_p y)=\phi^n(x)\hat{\circ}^n_p\phi^n(y), \quad \forall  n\notin \alpha, \quad \forall (x,y)\in Q^n\times_pQ^n$.
\end{itemize}
In the case of reflexive $\omega$-globular sets and reflexive globular $\omega$-magmas, $\alpha$-contravariant morphisms are furthermore required to satisfy: 
$\phi^n\circ \iota^{n-1}=\hat{\iota}^n\circ\phi^{n-1}$, for all $n\in \mathbb{N}$.  

A (reflexive) $\omega$-globular set $Q^0 \overset{s^0}{\underset{t^0}{\leftleftarrows}} Q^1 \overset{s^1}{\underset{t^1}{\leftleftarrows}} \cdots \overset{s^{n-1}}{\underset{t^{n-1}}{\leftleftarrows}} Q^n \overset{s^n}{\underset{t^n}{\leftleftarrows}} \cdots$ is \textbf{self-dual} if there exists a family of $\alpha$-contravariant morphisms 
$\ast^n_\alpha:Q^n\rightarrow Q^n$, for every $n\in \mathbb{N}_0$ and $\alpha \subseteq \mathbb{N}_0$, in detail: 
\begin{itemize}
\item $s^n(f^{\ast^{n+1}_\alpha})=t^n(f)^{*_\alpha^n}$ and $t^n(f^{\ast^{n+1}_\alpha})=s^n(f)^{*_\alpha^n}$
for every $n\in \alpha$ and $f\in Q^{n+1}$,
\item $s^n(f^{\ast^{n+1}_\alpha})=s^n(f)^{*_\alpha^n}$ and $t^n(f^{\ast^{n+1}_\alpha})=t^n(f)^{*_\alpha^n}$
for every $n\notin \alpha$ and $f\in Q^{n+1}$.
\end{itemize}
Similarly a \textbf{(reflexive) self-dual globular $\omega$-magma} is a (reflexive) globular $\omega$-magma whose underlying $\omega$-globular set is self-dual. Notice that in all these cases a self-duality is only an $\alpha$-contravariant morphism of $\omega$-globular sets, but it is not a morphism of reflexive $\omega$-globular sets or a morphism of (reflexive) $\omega$-magmas. 

The shape of $2$-cells related by self-dualities $*_{\varnothing},*_{\{0\}},*_{\{1\}},*_{\{0,1\}}$ are pictured here below: 
\\ 
$*_{\{1\}}: \xymatrix{A \rtwocell^f_g{x}& B}\quad \mapsto \quad \xymatrix{A \rrtwocell^f_g{^{\quad \quad \quad \ \ {x^{*_{\{1\}}}}}}& & B,}$
\quad  
$*_{\{0\}}: \xymatrix{A \rtwocell^f_g{x}& B}\quad \mapsto \quad \xymatrix{A &  
& \lltwocell_{f^{*_{\{0\}}}}^{g^{*_{\{0\}}}}{^{\quad \ x^{*_{\{0\}}}}}B,}$ \\ 
$*_{\{0,1\}}: \xymatrix{A \rtwocell^f_g{x}& B}\quad \mapsto \quad 
\xymatrix{A& & \lltwocell_{f^{*_{\{0\}}}}^{g^{*_\{0\}}}{\quad \quad \quad \ \ \ x^{*_{\{0,1\}}}}B,}$
\quad 
$*_{\varnothing}: \xymatrix{A \rtwocell^f_g{x}& B}\quad \mapsto \quad 
\xymatrix{A \rrtwocell^f_g{\quad x^{*_\varnothing}} & & B.}$

A \textbf{self-dual} morphism $Q\xrightarrow{\phi}\hat{Q}$ between self-dual $\omega$-quivers, $\omega$-globular sets or (reflexive) globular 
$\omega$-magmas is a morphism of the respective structures such that:  
$\phi^n(x^{*^n_\alpha})=\phi^n(x)^{\hat{*}^n_\alpha}$, for all $x\in Q^n$, for all $\alpha\subset\mathbb{N}$ and $n\in \mathbb{N}_0$. 

A \textbf{free (reflexive) self-dual globular $\omega$-magma} over an $\omega$-globular set $Q$, is a (reflexive) self-dual globular $\omega$-magma $M$ with a morphism of $\omega$-globular sets $\nu:Q\to M$ satisfying the following universal factorization property: for every morphism 
$\phi: Q\to \hat{M}$ (as $\omega$-globular sets) into a (reflexive) self-dual globular $\omega$-magma $\hat{M}$, there exists a unique morphism 
$\hat{\phi}:M\to\hat{M}$ of (reflexive) self-dual globular $\omega$-magmas such that $\phi=\hat{\phi}\circ\nu$. 
\textbf{Free (reflexive) self-dual $\omega$-globular sets} over an $\omega$-globular set, can be defined along the same lines. 

\begin{proposition}\label{prop: free-sd} 
Free self-dual reflexive globular $\omega$-magmas over an $\omega$-globular set $Q$ exist. 
\end{proposition}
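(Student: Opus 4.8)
The plan is to construct the free object \emph{explicitly} as an algebra of formal terms over $Q$, in complete analogy with the constructions of free (reflexive) globular $\omega$-magmas and free strict globular $\omega$-categories recalled in subsection~\ref{subsec: Penon}, and then read off the universal property from the way terms are generated. The key observation is that a reflexive self-dual globular $\omega$-magma carries only operations --- the partial compositions $\circ^n_p$, the reflexivities $\iota^k$, the self-dualities $\ast^n_\alpha$ --- subject to \emph{no equational axioms among elements}: every condition in the relevant definitions constrains the source and target maps rather than imposing a relation (in particular the dualities $\ast^n_\alpha$ are, by design, not required to be compatible with $\circ^n_p$ or with $\iota$). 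Hence the free object should be precisely the graded set of well-formed formal terms built from $Q$ by these operations, with source, target, reflexivity, composition and duality defined in the evident recursive way.

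Concretely, I would build $M=(M^n)_{n\in\mathbb{N}_0}$ together with its maps $s^n,t^n\colon M^{n+1}\to M^n$ in stages, by a single simultaneous recursion. Set $M^n_0:=\nu(Q^n)$, a disjoint copy of $Q^n$, with $s^n,t^n$ inherited from $Q$. Given $M^\bullet_k$ with its structure maps, let $M^\bullet_{k+1}$ adjoin to $M^\bullet_k$ the new formal symbols $\iota^{n-1}(a)$ for $a\in M^{n-1}_k$; the symbols $a^{\ast^n_\alpha}$ for $a\in M^n_k$ and $\alpha\subseteq\mathbb{N}_0$; and the symbols $a\circ^n_p b$ for $0\le p<n$ and $(a,b)$ a pair in the pullback $M^n_k\times_{M^p_k}M^n_k$, composability being computed inside $M^\bullet_k$. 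The maps $s^n,t^n$ are extended to the new symbols by the defining clauses: $s^n\iota^n(a)=a=t^n\iota^n(a)$; the source and target of $a^{\ast^n_\alpha}$ are obtained from those of $a$ by the $\ast$-clauses of the definition of self-dual $\omega$-globular set (source and target interchanged exactly at the levels belonging to $\alpha$); and the source and target of $a\circ^n_p b$ are obtained from those of $a$ and $b$ by the $\circ^n_p$-clauses of the definition of globular $\omega$-magma. Part of the same induction on $k$ is the verification that each of these right-hand sides is indeed an already-formed term of $M^\bullet_{k+1}$ and that the globularity identities $s^{n-1}s^n=s^{n-1}t^n$, $t^{n-1}s^n=t^{n-1}t^n$ hold on $M^\bullet_{k+1}$; the only non-routine point is that, for a $\circ$-term, the pair $(s^{n-1}a,s^{n-1}b)$ (resp. $(t^{n-1}a,t^{n-1}b)$) is again $p$-composable, which follows from the globularity and composability already available at stage $k$. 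Finally put $M^n:=\bigcup_k M^n_k$, with $s^n,t^n,\iota^n,\circ^n_p,\ast^n_\alpha$ the unions of the staged data.

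It is then immediate that $M$ is a reflexive self-dual globular $\omega$-magma --- the globularity identities have been checked stage by stage, $s^k\iota^k=\id=t^k\iota^k$ holds by fiat, $\circ^n_p$ is a total map on the pullback $M^n\times_{M^p}M^n$ with the prescribed source/target behaviour, and each $\ast^n_\alpha$ is an $\alpha$-contravariant self-map of the $\omega$-globular set $M$ --- and that $\nu\colon Q\to M$ is a morphism of $\omega$-globular sets. For the universal property, let $\hat M$ be any reflexive self-dual globular $\omega$-magma and $\phi\colon Q\to\hat M$ a morphism of $\omega$-globular sets; define $\hat\phi\colon M\to\hat M$ by recursion on the stage, $\hat\phi(\nu(q)):=\phi(q)$, $\hat\phi(\iota^{n-1}(a)):=\hat{\iota}^{n-1}(\hat\phi(a))$, $\hat\phi(a^{\ast^n_\alpha}):=\hat\phi(a)^{\hat{\ast}^n_\alpha}$, $\hat\phi(a\circ^n_p b):=\hat\phi(a)\,\hat{\circ}^n_p\,\hat\phi(b)$. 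A simultaneous induction shows that $\hat\phi$ commutes with every $s^n$ and $t^n$; this is exactly what guarantees that a pair in $M^n\times_{M^p}M^n$ is sent to a pair in $\hat M^n\times_{\hat M^p}\hat M^n$, so the clause for $\circ^n_p$ is legitimate, and it makes $\hat\phi$ a morphism of reflexive self-dual globular $\omega$-magmas with $\hat\phi\circ\nu=\phi$. Uniqueness is clear: $M$ is generated by $\nu(Q)$ under $\iota$, $\ast$ and $\circ$, so any morphism $\psi$ of reflexive self-dual globular $\omega$-magmas with $\psi\circ\nu=\phi$ must obey the four recursive clauses, hence $\psi=\hat\phi$.

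I expect the main obstacle to be nothing deep but rather the sheer bookkeeping in the two interlocked inductions: that the staged recursion is well-founded and produces well-defined source/target maps satisfying globularity (the subtle step being composability-preservation under $s$ and $t$ for $\circ$-terms), and that the extension $\hat\phi$ genuinely preserves source and target, which is what makes it definable on formal composites in the first place. One should in particular keep track of the fact that the three families of operations must be interleaved --- the dualities $\ast^n_\alpha$, though not interacting with $\circ^n_p$ or $\iota$, are applicable to every term --- so the term algebra closes up only across all of them; the staged presentation handles this automatically since each new symbol has strictly larger stage than its immediate constituents. Alternatively one could present reflexive self-dual globular $\omega$-magmas as the models of a many-sorted partial algebraic theory and quote a general existence theorem for free models, but the explicit term construction is the one that fits the remainder of the paper and will be reused for the later free constructions.
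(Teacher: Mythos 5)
Your proposal is correct and follows essentially the same route as the paper: an explicit term-algebra construction over $Q$ (exploiting the fact that no equational axioms relate the operations $\iota$, $\ast_\alpha$, $\circ^n_p$ at this stage), with source/target defined recursively, globularity and composability-preservation checked by induction, and the universal property obtained from the unique recursive extension of $\phi$. The only difference is bookkeeping: the paper organizes the recursion dimension by dimension and absorbs iterated dualities into a multi-index $\gamma$ (using symmetric differences $\bigtriangleup\gamma$ to decide where source and target swap), whereas you adjoin one operation symbol per stage across all dimensions --- the resulting free objects are canonically isomorphic.
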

\begin{proof}
The construction relies heavily on recursive arguments. 
Let $Q$ be an $\omega$-globular set. \\ 
Consider $\Gamma:=\{(\alpha_1,\dots,\alpha_m) \ | \ m\in \mathbb{N}, \ \forall k=1,\dots,m, \ \alpha_k\subset\mathbb{N}_0\}\cup\{\varnothing\}$ as a set of multi-indexes and, for $\gamma=(\alpha_1,\dots,\alpha_m)\in \Gamma$, the symmetric difference set 
$\bigtriangleup \gamma:=\alpha_1\bigtriangleup \cdots\bigtriangleup\alpha_m\subset \mathbb{N}_0$ .  

Let $M^0:=\{(x,\gamma) \ | \ x\in Q^0, \ \gamma\in \Gamma\}$. 
Define $M^1_\iota:=\{(z,\iota_1) \ | \ z\in M^0\}$ as a disjoint copy of $M^0$ and $s^0/t^0(z,\iota_1):=z$. 
Define $M^1[1]:=\{(z,\gamma) \ | \ z\in Q^1\cup M^1_\iota, \  \gamma\in\Gamma\}$ with 
$s^0[1]/t^0[1](z,\gamma):=(s^0/t^0(z),\gamma)$, if $0\notin\bigtriangleup \gamma$,  
$s^0[1]/t^0[1](z,\gamma):=(t^0/s^0(z),\gamma)$, if $0\in\bigtriangleup \gamma$. 

Suppose, by recursion, that we already defined $M^1[1],\dots,M^1[k-1]$ and $s^0/t^0$ on them, we define 
$M^1[k]:=\{((x,0,y),\gamma) \ | \ (x,y)\in M^1[i]\times_{M^0}M^1[j], \ i+j=k, \ \gamma\in \Gamma\}$ and we further set sources and targets as follows: 
\begin{gather*}
s^0[k]/t^0[k]((x,0,y),\gamma):=(s^0[k](y),\gamma)/(t^0[k](x),\gamma), \quad \text{if $0\notin \bigtriangleup\gamma$}, 
\\ 
s^0[k]/t^0[k]((x,0,y),\gamma):=(t^0[k](x),\gamma)/(s^0[k](x),\gamma), \quad \text{if $0\in \bigtriangleup\gamma$}. 
\end{gather*}  

Finally we define $M^1:=\cup_{k\in\mathbb{N}}M^1[k]$ and $s^0/t^0$ as ``union'' of the previous maps. 

Suppose, by further recursion, that we already defined $M^m$, for $m=0,\dots,n$, and all the maps $s^j/t^j$ on them. 
We define $M^{n+1}_\iota:=\{(z,\iota_{n+1}) \ | \ z\in M^{n}\}$, with $s^{n}/t^{n}(z,\iota_{n+1}):=z$. 

Similarly $M^{n+1}[1]:=\{(z,\gamma)\ | \ z\in Q^{n+1}\cup M^{n+1}_\iota, \gamma\in \Gamma\}$ with 
$s^{n}[1]/t^{n}[1](z,\gamma):=(s^{n}/t^{n}(z),\gamma)$, if $n\notin\bigtriangleup \gamma$, and 
$s^{n}[1]/t^{n}[1](z,\gamma):=(t^{n}/s^{n}(z),\gamma)$, if $n\in\bigtriangleup \gamma$. 
If we suppose, by recursion, already defined $M^{n+1}[1],\dots,M^{n+1}[k-1]$, and all source/target maps $s^{n}/t^{n}$ on them, we futher define $M^{n+1}[k]:=\{((x,p,y),\gamma) \ | \ p=0,\dots,n, \ (x,y)\in M^{n+1}[i]\times_{M^p} M^n[j], \ i+j=k, \ \gamma\in \Gamma\}$ and we set sources and targets as follows: 
\begin{align*}
&s^{n}[k]/t^{n}[k]((x,n,y),\gamma):=\begin{cases}
(s^{n}[k](y),\gamma)/(t^{n}(x)[k],\gamma),\  \text{if $n\notin \bigtriangleup\gamma$}, 
\\ 
(t^{n}[k](x),\gamma)/(s^{n}(x)[k],\gamma), \ \text{if $n\in \bigtriangleup\gamma$}, 
\end{cases} 
\\
&s^{n}[k]/t^{n}[k]((x,p,y),\gamma):=\begin{cases}
((s^{n}[k](x)/t^{n}[k](x), p, s^{n}[k](y)/t^{n}[k](y)),\gamma), 
\ \text{if $n\notin \bigtriangleup\gamma$, $p<n$}, 
\\ 
((t^{n}[k](x)/s^{n}[k](x), p, t^{n}[k](y)/s^{n}[k](y)),\gamma), 
\ \text{if $n\in \bigtriangleup\gamma$, $p<n$}. 
\end{cases}
\end{align*}
Finally we set $M^{n+1}:=\cup_{k\in \mathbb{N}}M^{n+1}[k]$, and $s^{n}/t^{n}$ the ``union'' of $s^{n}[k]/t^{n}[k]$.

The new $\omega$-quiver $M^0\leftleftarrows\cdots\leftleftarrows M^n\leftleftarrows \cdots$ is, by induction, an $\omega$-globular set; 
the nullary operations $\iota^n:M^{n-1}\to M^n$ are given by $z\mapsto ((z,\iota_n),\varnothing)$, for all $z\in M^{n-1}$;  
the unary operations $*^n_\alpha: M^n\to M^n$, for $\alpha\subset \mathbb{N}_0$, are given by $(z,\gamma)\mapsto (z,\gamma\oplus\{\alpha\})$, where we assume $(\alpha_1,\dots,\alpha_m)\oplus\{\alpha\}:=(\alpha_1,\dots.\alpha_m,\alpha)\in \Gamma$;  
the binary compositions $\circ^n_p:M^n\times_{M^p}M^n\to M^n$ are simply $(x,y)\mapsto (x,p,y)$.  
All the previous operations inductively satisfy the structural axioms for a self-dual reflexive globular $\omega$-magma. 

We only need to check the universal factorization property for the globular $\omega$-magma $M$ with the inclusion map $\nu:Q\to M$ given by 
$x\mapsto (x,\varnothing)\in M^n[1]\subset M^n$, for all $x\in Q^n$. 
For this purpose, let $\phi: Q\to\hat{M}$ a morphism of $\omega$-globular sets into another self-dual reflexive globular $\omega$-magma $\hat{M}$. The only possible choice of a map $\hat{\phi}:M\to\hat{M}$ such that $\phi=\hat{\phi}\circ\nu$, must necessarily satisfy 
$(x,\varnothing)\mapsto \phi(x)$ and, by recursion, using the fact that $\hat{\phi}$ is a morphism of self-dual reflexive globular $\omega$-magmas, we obtain, for all $n\in \mathbb{N}$, $((x,p,y),\gamma)\mapsto (\phi(x)\hat{\circ}^n_p\phi(y))^{\hat{*}_{\alpha_1}\cdots\hat{*}_{\alpha_m}}$, where $\gamma=(\alpha_1,\dots,\alpha_n)$. By induction this well-defined unique morphism $\hat{\phi}$ is a morphism of self-dual reflexive 
$\omega$-magmas such that $\phi=\hat{\phi}\circ\nu$ and this completes the proof. 
\end{proof}
Along similar lines, one can actually produce recursive construnctions of free (reflexive) self-dual $\omega$-globular sets and (reflexive) globular $\omega$-magmas over a given $\omega$-globular set. 

\subsection{Involutive Strict Globular $\omega$-Categories}\label{subsec: inv cat}

An \textbf{$\alpha$-contravariant functor} $C\xrightarrow{\phi}\hat{C}$ between strict globular $\omega$-categories is an $\alpha$-contravariant morphism of the undelying reflexive globular $\omega$-magmas. 

An \textbf{involutive strict globular $\omega$-category}\footnote{Here we are exactly following the definition put forward in~\cite{BCLS,B} for the case of $n$-categories.} is a strict globular $\omega$-category that is also a self-dual $\omega$-globular set with self-dualities $*_\alpha$, with 
$\alpha\subset \mathbb{N}_0$ that are $\alpha$-contravariant functors that further satisfy the following algebraic axioms: 
\begin{itemize}
\item
$(x^{*_\alpha})^{*_\alpha}=x, \quad \forall x\in C, \quad \forall \alpha\subset\mathbb{N}$, 
\item 
$(x^{*_\alpha})^{*_\beta}=(x^{*_\beta})^{*_\alpha}, \quad \forall x\in C, \quad \forall \alpha,\beta\subset\mathbb{N}$.
\end{itemize}
A $*$-functor between involutive strict globular $\omega$-categories is just a functor 
$C\xrightarrow{\phi}\hat{C}$ such that: $\phi(x^{*_\alpha})=\phi(x)^{\hat{*}_\alpha}$ for all $x\in C$ and for all $\alpha\subset \mathbb{N}$. 

A \textbf{free involutive strict globular $\omega$-category} over an $\omega$-globular set $Q$, is an involutive strict globular $\omega$-category $C$, with a morphism of $\omega$-globular sets $\nu:Q\to C$, satisfying the following universal factorization property: for every morphism 
$\phi: Q\to \hat{C}$ (as $\omega$-globular sets) into an involutive strict globular $\omega$-category $\hat{C}$, there exists a unique $*$-functor 
$\hat{\phi}:M\to\hat{M}$ such that $\phi=\hat{\phi}\circ\nu$. Unicity up to a unique isomorphism of involutive strict globular $\omega$-categories commuting with the inclusion morphisms is standard from the universal factorization. The existence can be obtained by a recursive construction, as in the previous case of a free self-dual globular $\omega$-magma, but we present here an alternative ``quotient'' argument starting from the already available free self-dual reflexive globular $\omega$-magmas over the $\omega$-globular set $Q$. 

Let $M:=M^0\leftleftarrows \cdots \leftleftarrows M^n\leftleftarrows \cdots$ be a self-dual reflexive globular $\omega$-magma. 
Consider its Cartesian product $M\times M:=(M^0\times M^0)\leftleftarrows \cdots \leftleftarrows (M^n\times M^n)\leftleftarrows \cdots$, where, for all $n\in\mathbb{N}_0$, the source and target maps $s^n_{M\times M}:=(s^n_M,s^n_M)$, $t^n_{M\times M}:=(t^n_M,t^n_M)$, as well as the 
structural nullary $\iota^n_{M\times M}:=(\iota^n_M,\iota^n_M)$, unary $(x,y)^{*_\alpha^{M\times M}}:=(x^{*_\alpha^M},y^{*_\alpha^M})$, 
and (when they exist) binary operations $(x_1,y_1)\circ^{(M\times M)^n}_p(x_2,y_2):=(x_1\circ^{M^n}_p y_1,x_2\circ^{M^n}_p y_2)$ are defined componentwise. 
In this way, the product $M\times M$ is another self-dual reflexive globular $\omega$-magma. 

A \textbf{congruence} in the self-dual reflexive globular $\omega$-magma $M$ is a self-dual reflexive globular $\omega$-magma 
$R$ such that, for all $n\in \mathbb{N}_0$, $R^n\subset M^n\times M^n$ is an equivalence relation in $M^n$ and the inclusions 
$\nu_n$ provide a morphism of self-dual reflexive globular $\omega$-magmas $\nu:R\to M\times M$. 

Under such conditions, we obtain a \textbf{quotient self-dual reflexive globular $\omega$-magma} $M/R$ with the quotient sets 
$M^n/R^n=:(M/R)^n$, $n\in \mathbb{N}_0$, with sources/targets given by $s^n_{M/R}([x]_{{n+1}}):=[s^n_M(x)]_n$, 
$t^n_{M/R}([x]_{{n+1}}):=[t^n_M(x)]_n$, compositions (whenever existing) defined as $[x]_n\circ^{(M/R)^n}_p[y]_n:=[x\circ^{M^n}_p y]_n$ and nullary operations $\iota^n_{M/R}([x]_n)=[\iota^n_M(x)]_{n+1}$. 
Furthermore, the quotient maps $\pi^n:M^n\to M^n/R^n$ onto the quotient sets give us a morphism $\pi:M\to M/R$ between self-dual reflexive globular $\omega$-magmas.\footnote{In a perfectly analogous way, one can introduce congruences and quotients for all the other ``intermediate'' structures between $\omega$-globular sets and self-dual reflexive globular $\omega$-magmas.} 
\begin{proposition}\label{prop: free-ic}
Free involutive strict globular $\omega$-categories over an $\omega$-globular set $Q$, exist. 
\end{proposition}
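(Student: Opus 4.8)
The plan is to obtain the free involutive strict globular $\omega$-category over $Q$ as a quotient of the free self-dual reflexive globular $\omega$-magma $M$ over $Q$, whose existence is guaranteed by Proposition~\ref{prop: free-sd}. First I would fix $M$ together with its universal morphism $\nu\colon Q\to M$ of $\omega$-globular sets. On $M$ I would define, for each $n\in\mathbb{N}_0$, the smallest congruence $R$ (in the sense just introduced, i.e.\ a self-dual reflexive globular $\omega$-magma with $R^n\subset M^n\times M^n$ an equivalence relation, compatible with $\nu\colon R\to M\times M$) that contains all the pairs forced by the axioms of a strict globular $\omega$-category (associativity, unitality, functoriality of identities, binary exchange) together with the pairs forced by the two involutive axioms $(x^{*_\alpha})^{*_\alpha}=x$ and $(x^{*_\alpha})^{*_\beta}=(x^{*_\beta})^{*_\alpha}$. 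The existence of a smallest such congruence follows because the collection of all congruences containing a given set of pairs is closed under ``intersection'' (componentwise intersection of the $R^n$), and $M\times M$ itself is a congruence; this is the standard Mal'cev-type argument, and I would only sketch it.

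Next I would set $C:=M/R$, which by the ``quotient self-dual reflexive globular $\omega$-magma'' construction recalled just above the statement is again a self-dual reflexive globular $\omega$-magma, equipped with the quotient morphism $\pi\colon M\to M/R$ and hence with the composite $\nu_C:=\pi\circ\nu\colon Q\to C$ of $\omega$-globular sets. The point of having thrown the strict-category axioms into $R$ is that the well-definedness of the induced operations on $M/R$ (sources, targets, $\iota^n$, $\circ^n_p$, $*^n_\alpha$) combined with membership of the relevant pairs in $R$ makes associativity, unitality, functoriality of identities and binary exchange hold in $C$; likewise the involutive pairs in $R$ make $(x^{*_\alpha})^{*_\alpha}=x$ and $(x^{*_\alpha})^{*_\beta}=(x^{*_\beta})^{*_\alpha}$ hold, and $*_\alpha$ is $\alpha$-contravariant because it already is in $M$ and $\pi$ intertwines the structures. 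So $C$ is an involutive strict globular $\omega$-category.

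It remains to verify the universal factorization property. Given any $\omega$-globular set morphism $\phi\colon Q\to\hat C$ into an involutive strict globular $\omega$-category $\hat C$, the universal property of $M$ yields a unique morphism $\hat\phi_M\colon M\to\hat C$ of self-dual reflexive globular $\omega$-magmas with $\phi=\hat\phi_M\circ\nu$. One shows that the kernel pair of $\hat\phi_M$ (componentwise $\{(x,y)\mid \hat\phi_M(x)=\hat\phi_M(y)\}$) is a congruence on $M$ that contains all the generators of $R$: it contains the strict-category generators because $\hat C$ satisfies those axioms, and it contains the involutive generators because $\hat C$ is involutive; hence $R$ is contained in this kernel pair, so $\hat\phi_M$ factors uniquely through $\pi$ as $\hat\phi\colon C\to\hat C$, which is automatically a $*$-functor and satisfies $\phi=\hat\phi\circ\nu_C$. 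Uniqueness of $\hat\phi$ follows since $\pi$ is (componentwise) surjective and $\hat\phi_M$ is unique. Standard universal-property nonsense then gives uniqueness of $C$ up to a unique isomorphism commuting with the inclusions, as already noted in the text.

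The main obstacle is the bookkeeping around the smallest congruence: one must be careful that $R$ is required to be not merely a family of equivalence relations but itself a self-dual reflexive globular $\omega$-magma mapping into $M\times M$, so closing the generating set of pairs under the magma operations and under the globular source/target constraints (and checking this closure is compatible with taking intersections of congruences) is the delicate part. Everything else — that quotients inherit the strict axioms once the right pairs are identified, and that kernel pairs are congruences — is routine once the congruence machinery above the statement is in place, and I would treat it briefly.
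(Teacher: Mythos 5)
Your overall strategy is exactly the paper's: quotient the free self-dual reflexive globular $\omega$-magma $M$ of Proposition~\ref{prop: free-sd} by the smallest congruence generated by the algebraic axioms, then verify the universal property via the kernel pair of the induced morphism $\overline{\phi}:M\to\hat C$. The congruence machinery, the intersection argument, and the factorization-through-$\pi$ step all match the paper's proof.

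There is, however, a genuine gap in your choice of generators for $R$. You include the four strict-category axioms and the two purely algebraic involution axioms $(x^{*_\alpha})^{*_\alpha}=x$ and $(x^{*_\alpha})^{*_\beta}=(x^{*_\beta})^{*_\alpha}$, but the definition of an involutive strict globular $\omega$-category also demands that each $*_\alpha$ be an \emph{$\alpha$-contravariant functor}, i.e.\ compatible with the compositions (covariantly or with reversed order according to whether the relevant index lies in $\alpha$) and with the units $\iota$. These compatibilities do \emph{not} hold in the free self-dual reflexive globular $\omega$-magma: there $(x\circ_p y)^{*_\alpha}$ and $x^{*_\alpha}\circ_p y^{*_\alpha}$ (resp.\ $y^{*_\alpha}\circ_p x^{*_\alpha}$) are distinct formal terms, and likewise $\iota(x)^{*_\alpha}\neq\iota(x^{*_\alpha})$. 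The paper explicitly warns that in a self-dual globular $\omega$-magma a self-duality ``is only an $\alpha$-contravariant morphism of $\omega$-globular sets, but it is not a morphism of (reflexive) $\omega$-magmas,'' so your justification that ``$*_\alpha$ is $\alpha$-contravariant because it already is in $M$ and $\pi$ intertwines the structures'' is precisely the step that fails. The paper's generating set $X$ accordingly contains the additional families of pairs $\bigl((x\circ_p y)^{*_\alpha},\,x^{*_\alpha}\circ_p y^{*_\alpha}\bigr)$ for $p\notin\alpha$, $\bigl((x\circ_p y)^{*_\alpha},\,y^{*_\alpha}\circ_p x^{*_\alpha}\bigr)$ for $p\in\alpha$, and $\bigl(\iota(x^{*_\alpha}),\iota(x)^{*_\alpha}\bigr)$. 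Without them your quotient $M/R$ is not an involutive strict globular $\omega$-category, so the object you construct does not lie in the target category; once these generators are added, the rest of your argument (the kernel pair of $\overline{\phi}$ still contains them because $\hat C$ is involutive) goes through verbatim.
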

\begin{proof}
Let $Q\xrightarrow{\nu}M$ be the free strict self-dual reflexive globular $\omega$-magma over $Q$ as constructed in 
proposition~\ref{prop: free-sd}. 
In order to obtain from $M$ an involutive strict globular \hbox{$\omega$-category}, we must impose all the ``algebraic axioms'' (structural axioms for the globularity of $\omega$-quiver and for the ``domain/codomain'' of the nullary, unary and binary operations are already in place in $M$). 

We consider the congruence $R$ in $M$ ``generated'' by all the possible pairs of terms involved in the expression of the algebraic axioms: 
\begin{align*}
X:=&\{(x,(x^{*_\alpha})^{*_\alpha}) \ | \ x\in M, \ \alpha\subset \mathbb{N}_0\}\cup
\{((x^{*_\alpha})^{*_\beta}, (x^{*_\beta})^{*_\alpha}) \ | \ x\in M, \ \alpha,\beta\subset \mathbb{N}_0\}\cup 
\\ 
&\{((x\circ_py)^{*_\alpha}, (x^{*_\alpha})\circ_p (y^{*_\alpha})) \ | \ (x,y)\in M\times_{M^p}M, \ \mathbb{N}_0\ni p\notin\alpha\subset{N}_0\}\cup
\\ 
&\{((x\circ_py)^{*_\alpha}, (y^{*_\alpha})\circ_p (x^{*_\alpha})) \ | \ (x,y)\in M\times_{M^p}M, \ \mathbb{N}_0\ni p\in\alpha\subset{N}_0\}\cup 
\\
&\{(x\circ_p(y\circ_pz),(x\circ_py)\circ_p z) \ | \ (x,y,z)\in M\times_{M^p}M\times_{M^p}M, \ p\in\mathbb{N}_0\}\cup 
\\ 
&\begin{aligned}
\{((x\circ_py)\circ_q(z\circ_pw), (x\circ_qz)\circ_p(y\circ_qw)) \ | \ &(x,y),(z,w)\in M\times_{M^p}M, \ 
\\ 
&(x,z),(y,w)\in M\times_{M^q}M,\ p,q\in \mathbb{N}_0\} \cup
\end{aligned}
\\ 
&\{(\iota(x)\circ_q\iota(y),\iota(x\circ_qy)) \ | \ (x,y)\in M\times_{M^q}M, \ q\in \mathbb{N}_0 \}\cup
\\ 
&\{(\iota(x^{*_\alpha}),\iota(x)^{*_\alpha}) \ | \ x\in M, \ \alpha\subset\mathbb{N}_0\}\cup  
\\ 
&\{((\iota^{n-1}\circ\cdots\iota^p\circ t^p\circ\cdots\circ t^{n-1} (x))\circ^n_p x, x) \ | \ n,p \in \mathbb{N}_0,\  x\in M^n\}\cup  
\\ 
&\{(x, x\circ^n_p (\iota^{n-1}\circ\cdots\iota^p\circ s^p\circ\cdots\circ s^{n-1} (x))) \ | \ n,p \in \mathbb{N}_0,\  x\in M^n\}.
\end{align*}
this is by definition the smallest congruence in $M$ containing $X$. 
Such a congruence always exists and (since the arbitrary intersection of congruences is a congruence and $M\times M$ is always a congruence containing $X$) it coincides with the intersection of all congruences in $M$ containing $X\subset M\times M$. 
Taking now the quotient self-dual reflexive globular $\omega$-magma $M/R$, we note that, since $X\subset R$, all the algebraic axioms are already satisfied in $M/R$ and hence $M/R$ is already an involutive strict globular $\omega$-category. 

We only need to check that $Q\xrightarrow{\pi\circ \nu}M/R$ is a free involutive strict globular $\omega$-category via the universal factorization property. Let $\phi: Q\to \hat{C}$ be a morphism of $\omega$-globular sets into an involutive strict $\omega$-category $\hat{C}$. Since $M$ is a free self-dual reflexive globular $\omega$-magma over $Q$, there exists one and only one morphism of self-dual reflexive globular $\omega$-magmas 
$\overline{\phi}:M\to \hat{C}$ such that $\phi=\overline{\phi}\circ\nu$.  
Consider, for all $n\in \mathbb{N}_0$, $R_{\phi}^n:=\{(x,y)\in M^n\times M^n \ | \ \overline{\phi}_n(x)=\overline{\phi}_n(y)\}$. 
Since $\overline{\phi}$ is a morphism of self-dual reflexive globular $\omega$-magmas, $R_\phi$ becomes a congruence in $M$ and, thanks to the fact that $\hat{C}$ is already an involutive strict globular $\omega$-category, we have $X\subset R_\phi$ and hence $X/R_\phi$ is already an involutive strict globular $\omega$-category, and the assignment $\widetilde{\phi}:[x]_{R_\phi}\mapsto \overline{\phi}(x)$, for $x\in M$, is a well-defined 
$*$-functor $\widetilde{\phi}:M/R_\phi\to\hat{C}$ and it is the unique map such that $\widetilde{\phi}\circ\pi_\phi=\overline{\phi}$, where 
$\pi_\phi:M\to M/R_\phi$ denotes the quotient morphism. 

Since $R$ is the smallest congruence containing $X$, we have $R\subset R_\phi$ and hence there is a unique well-defined map $\theta:M/R\to M/R_\phi$ via the assignment $\theta: [x]_R\mapsto [x]_{R_\phi}$, for all $x\in M$, and $\theta$ is a $*$-functor of involutive strict globular 
$\omega$-categories and actually the unique map such that $\pi_\phi=\theta\circ\pi$. Combining the equations, we see that 
$\hat{\phi}:=\widetilde{\phi}\circ \theta:M/R\to\hat{C}$ is a $*$-functor and it is the unique morphism such that 
$\phi=\overline{\phi}\circ\nu=\widetilde{\phi}\circ \pi_\phi\circ\nu=\widetilde{\phi}\circ\theta\circ \pi\circ \nu=\hat{\phi}\circ(\pi\circ\nu)$.  
\end{proof}

\subsection{Involutive Weak Globular $\omega$-Categories}\label{subsec: w inv cat} 

Let $M$ be a self-dual reflexive globular $\omega$-magma, $C$ an involutive strict globular $\omega$-category,
and $\pi:M\to C$ a self-dual morphism of self-dual reflexive globular $\omega$-magmas.  
\\ 
Finally let $[\cdot,\cdot]_n$, $n\in \mathbb{N}$ be a usual Penon contraction for $\pi$, exactly as defined in section~\ref{subsec: Penon}. 

We have a category $\mathscr{Q}_\omega^*$ of ``self-dual'' Penon contractions, where morphisms are defined as 
$$(M_1\overset{\pi_1}{\to}C_1, [\cdot,\cdot]^1)\xrightarrow{(\Phi,\phi)}(M_2\overset{\pi_2}{\to}C_2, [\cdot,\cdot]^2),$$ 
where $\Phi:M_1\to M_2$ is a self-dual morphism of self-dual reflexive globular $\omega$-magmas and $\phi:C_1\to C_2$ is a $*$-functor of involutive strict $\omega$-categories, such that $\pi_2\circ \Phi=\phi\circ \pi_1$ and $\Phi([x,y]^1_q)=[\Phi(x),\Phi(y)]^2_q$ for every 
$q\in \mathbb{N}$, $x,y$ in the domain of $[\cdot,\cdot]^1_q$.

There is a forgetful functor $U^*$ from the category $\mathscr{Q}_\omega^*$ of ``self-dual'' Penon contractions to the category $\mathscr{G}$ of
$\omega$-globular sets, associating to a ``self-dual'' contraction $(M\overset{\pi}{\to}C, [\cdot,\cdot])$ the underlying $\omega$-globular set of $M$ (forgetting self-dualities, compositions and reflexive maps). 

A \textbf{free self-dual Penon contraction} over an $\omega$-globular set $Q$ is a self-dual Penon contraction 
$(M\overset{\pi}{\to}C, [\cdot,\cdot])$, with a morphism of $\omega$-globular sets $\nu: Q\to U^*((M\overset{\pi}{\to}C, [\cdot,\cdot]))$, such that the following universal factorization property holds: for any other morphism of $\omega$-globular sets 
$Q\xrightarrow{\phi} U^*(\hat{M}\overset{\hat{\pi}}{\to}\hat{C}, \widehat{[\cdot,\cdot]})$ into the undelying $\omega$-globular set of another self-dual Penon contraction $(\hat{M}\overset{\hat{\pi}}{\to}\hat{C}, \widehat{[\cdot,\cdot]})\in \mathscr{Q}_\omega^*$, there exists a unique morphism 
$(M\overset{\pi}{\to}C, [\cdot,\cdot])\xrightarrow{(\hat{\Phi},\hat{\phi})}(\hat{M}\overset{\hat{\pi}}{\to}\hat{C}, \widehat{[\cdot,\cdot]})$
in $\mathscr{Q}_\omega^*$ such that $U^*(\hat{\Phi},\hat{\phi})\circ \nu=\phi$. 
\begin{proposition}
Free self-dual Penon contractions over an $\omega$-globular set exist. 
\end{proposition}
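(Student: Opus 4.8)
The plan is to mimic J.\,Penon's original construction of free contractions, but carried out in the self-dual setting, so that the extra unary operations $*_\alpha$ are threaded through the whole recursion. Concretely, I would build the free self-dual Penon contraction over an $\omega$-globular set $Q$ by an iterated ``add a contraction cell, then close up under all the magma and self-duality operations'' construction. Start from the free self-dual reflexive globular $\omega$-magma $M_0 := M$ over $Q$ (Proposition~\ref{prop: free-sd}) and the free involutive strict globular $\omega$-category $C := M/R$ over $Q$ (Proposition~\ref{prop: free-ic}); by their universal factorization properties the morphism of $\omega$-globular sets $\nu_Q : Q \to M$ followed by $\pi : M \to M/R$ furnishes a canonical self-dual magma morphism $\pi_0 : M_0 \to C$. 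This is the ``stage $0$'' of the contraction, with empty family $[\cdot,\cdot]$.

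Next I would iterate. Given a self-dual magma $M_k$ with a self-dual morphism $\pi_k : M_k \to C$ and a partially defined contraction, form the set $P_k$ of all pairs $(x,y)$ in $M_k^q \times M_k^q$ (over all $q \in \mathbb{N}$) that are parallel and satisfy $\pi_k(x) = \pi_k(y)$ but for which no contraction cell $[x,y]_q$ has yet been introduced; adjoin one new $(q{+}1)$-cell $[x,y]_q$ for each such pair, with $s^q[x,y]_q := x$, $t^q[x,y]_q := y$ (using $\iota$ when $x=y$), and then freely close the resulting globular set under the binary compositions $\circ^n_p$, the reflexivity maps $\iota^n$, and \emph{all} the self-dualities $*_\alpha$, exactly as in the recursive construction inside the proof of Proposition~\ref{prop: free-sd}. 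This produces $M_{k+1}$. The map $\pi_{k+1} : M_{k+1} \to C$ is forced: on old cells it is $\pi_k$; on a new generator $[x,y]_q$ it must send it to $\iota^q_C(\pi_k(x)) = \iota^q_C(\pi_k(y))$ (this is well-defined precisely because $\pi_k(x)=\pi_k(y)$ and $C$ is reflexive), and on cells built by $\circ$, $\iota$, $*_\alpha$ it is determined by requiring $\pi_{k+1}$ to be a self-dual magma morphism — using here that $C$ is an involutive \emph{strict} globular $\omega$-category, so that all composites/identities/dualities of the target images are already available and well-behaved. Set $M_\infty := \varinjlim_k M_k$ (a filtered colimit, taken level-wise: $M_\infty^n = \bigcup_k M_k^n$), with the induced operations, the induced self-dual morphism $\pi_\infty : M_\infty \to C$, and the contraction $[\cdot,\cdot]$ assembled from all stages; by construction every parallel pair in $M_\infty$ with equal $\pi_\infty$-image already received its contraction cell at some finite stage, so $(M_\infty \overset{\pi_\infty}{\to} C, [\cdot,\cdot])$ is a bona fide object of $\mathscr{Q}_\omega^*$, with $\nu : Q \to U^*(M_\infty \overset{\pi_\infty}{\to} C,[\cdot,\cdot])$ the evident inclusion.

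Finally I would verify the universal factorization property. Given any self-dual Penon contraction $(\hat M \overset{\hat\pi}{\to} \hat C, \widehat{[\cdot,\cdot]})$ and a morphism of $\omega$-globular sets $\phi : Q \to U^*(\hat M \overset{\hat\pi}{\to}\hat C,\widehat{[\cdot,\cdot]})$, one constructs the pair $(\hat\Phi,\hat\phi)$ stage by stage: on stage $0$, universality of the free self-dual magma $M$ gives a unique self-dual magma morphism $M_0 \to \hat M$ extending $\phi$, and universality of $M/R$ as a free involutive strict $\omega$-category gives a unique $*$-functor $\hat\phi : C \to \hat C$; the compatibility $\hat\pi \circ \hat\Phi_0 = \hat\phi \circ \pi_0$ follows because both sides are magma morphisms $M_0 \to \hat C$ agreeing on the generators $\nu_Q(Q)$, hence equal by the freeness of $M_0$. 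Inductively, once $\hat\Phi_k : M_k \to \hat M$ is defined, each new generator $[x,y]_q$ must map to $\widehat{[\hat\Phi_k(x),\hat\Phi_k(y)]}_q$ — which is legitimate because $\hat\Phi_k(x),\hat\Phi_k(y)$ are parallel and $\hat\pi(\hat\Phi_k(x)) = \hat\phi(\pi_k(x)) = \hat\phi(\pi_k(y)) = \hat\pi(\hat\Phi_k(y))$, so the pair is in the domain of $\widehat{[\cdot,\cdot]}_q$ — and this forces $\hat\Phi_{k+1}$ uniquely on the freely-adjoined composites/identities/dualities; passing to the colimit yields the unique $(\hat\Phi,\hat\phi)$ with $U^*(\hat\Phi,\hat\phi)\circ\nu = \phi$.

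The main obstacle I anticipate is bookkeeping rather than conceptual: one must show that at each stage the newly closed-up structure $M_{k+1}$ genuinely satisfies all the \emph{structural} axioms of a self-dual reflexive globular $\omega$-magma (globularity $s^{k-1}s^k = s^{k-1}t^k$ etc.\ on the new cells, the correct source/target behaviour of $*_\alpha$ on contraction cells, and on freely-formed composites) and that $\pi_{k+1}$ is well-defined and self-dual — this requires carefully propagating, through the same nested recursion on ``word length'' used in Proposition~\ref{prop: free-sd}, the fact that $s^q[x,y]_q = x$, $t^q[x,y]_q = y$ are compatible with globularity because $x,y$ were parallel, and that $\pi_\infty$ respects $[\cdot,\cdot]$ by axiom~(3) of a Penon contraction. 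A secondary subtlety is confirming that the filtered colimit is harmless: since $\omega$-globular sets, magmas and the operations are all defined level-wise and the transition maps $M_k \hookrightarrow M_{k+1}$ are injective on each level, $M_\infty$ inherits every operation and axiom, and the ``every parallel equal-$\pi$-image pair has a contraction cell'' closure holds because any such pair lives already in some $M_k$ and gets its cell in $M_{k+1}$.
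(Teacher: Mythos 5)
Your construction is sound in outline but follows a genuinely different route from the paper's. The paper performs a \emph{single recursion over cell dimension}: it builds $M^n$, the quotient $C^n:=M^n/[R]^n$ and $\pi^n$ simultaneously, and adjoins the contraction cells for the parallel equal-image $n$-pairs as extra \emph{generators} of dimension $n+1$ (alongside $Q^{n+1}$ and the formal identities) before closing under the operations at that dimension; since contracting a $q$-pair only ever creates a $(q+1)$-cell, no iteration within a fixed dimension and no colimit are needed. You instead fix $C$ once and for all as the free involutive strict globular $\omega$-category over $Q$ and run an $\omega$-indexed ``adjoin contraction cells, then close'' iteration followed by a level-wise filtered colimit, in the spirit of Penon's original argument. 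Your variant has two advantages: the domain of $[\cdot,\cdot]$ is manifestly the whole set of parallel equal-image pairs required by the definition (the paper's text is ambiguous about whether cells are adjoined for the full congruence or only for its generating set $X^n$), and $C$ need not be re-derived as a quotient of the enlarged magma; the paper's variant buys a shorter induction and no colimit. Two points in your write-up need care. First, ``freely close the resulting globular set under $\circ^n_p$, $\iota^n$, $*_\alpha$'' must mean the free self-dual reflexive globular $\omega$-magma \emph{under} $M_k$ (equivalently, the free magma on the generating globular set consisting of $Q$ together with all contraction cells adjoined so far), not the free magma on the underlying globular set of $M_k$: the latter would re-adjoin formal composites $(x,p,y)$ distinct from the composites already present in $M_k$, so the transition maps $M_k\to M_{k+1}$ would fail to be magma morphisms and the colimit would not exist in the intended category. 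Second, the compatibility $\hat\pi\circ\hat\Phi=\hat\phi\circ\pi_\infty$ must be verified not only on $M_0$ but also on the contraction cells and everything generated from them; this does follow, by the same induction, from axiom (3) of the contraction in $\hat M$, namely $\hat\pi(\widehat{[u,v]}_q)=\hat\iota^q(\hat\pi(u))$, but it should be stated.
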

\begin{proof}
The construction proceeds by recursion merging techniques from propositions~\ref{prop: free-sd} and~\ref{prop: free-ic}. 

Let $Q$ be an $\omega$-globular set. We construct $M^0=C^0=Q^0$ and $\pi^0:M^0\to C^0$ as the identity. 
Note that the domain of $[\cdot,\cdot]_0$ is empty (there is no contraction induced by $\pi^0$).  
Using the same notations as in the proof of propositions~\ref{prop: free-sd} and~\ref{prop: free-ic}, we define $M^1$, $C^1:=M^1/R^1$ and 
$\pi^1:M^1\to C^1$ as the quotient map by the congruence $R^1\subset M^1\times M^1$ generated by all the algebraic axioms $X^1$ between 
1-arrows of the free self-dual reflexive globular $\omega$-magma. 
Note that now the domain of $[\cdot,\cdot]_1$ concides with $X^1$. We define on $(x,y)\in X^1\subset M^1\times M^1$, $s^1(x,y):=x$ and 
$t^1(x,y):=y$. Next we set $M^2[1]:=\{(z,\gamma) \ | \ z\in Q^2\cup M^2_\iota\cup X^1, \gamma\in \Gamma\}$ (note the introduction of extra 
2-arrows coming from the contractions relative to the algebraic axioms in $X^1$) and we proceed exactly as in the proof of 
proposition~\ref{prop: free-sd} to recursively define $M^2[k]$, for all $k\in\mathbb{N}$ and get $M^2:=\cup_{k\in\mathbb{N}}M^2[k]$ as well as the source/target maps $s^2/t^2$. \\ 
We define now $C^2:=M^2/[R]^2$, where $[R]^2$ is the congruence generated by the algebraic axioms $[X]^2$  in $M^2$, $\pi^2:M^2\to C^2$ is the quotient map and the contraction $[\cdot,\cdot]_1:X_1\to M^2$ is the inclusion 
$X^1\subset M^2[1]\subset M^2$. Note that the set $[X]^2$ now contains also the axioms for the contractions: 
$\{(s^1([x,y]_1),x) \ | \ (x,y)\in X^1\}\cup \{(t^1([x,y]_1),y)\ | \ (x,y)\in X^1\}\cup\{([x,x]_1,(x,\iota_1)) \ | \ x\in M^1\}$.
 
If we suppose, by recursion, that we already defined $\pi^n:M^n\to C^n$, $[\cdot,\cdot]_{n-1}:[X]^{n-1}\to M^n$  as above, we can consider 
$X^n\subset M^n\times M^n$ as the set of algebraic axioms between $n$-arrows; define 
$M^{n+1}[1]:=\{(z,\gamma) \ | \ z\in Q^{n+1}\cup M^{n+1}_\iota\cup [X]^n, \gamma\in \Gamma\}$ and 
$M^{n+1}:=\cup_{k\in \mathbb{N}}M^{n+1}[k]$; the contraction $[\cdot,\cdot]_n:[X]^n\to M^{n+1}$ always as inclusion; the congruence generated by the algebraic axioms $[X]^{n+1}$ between $(n+1)$-arrows $[R]^{n+1}\subset M^{n+1}\times M^{n+1}$ and finally obtain 
$\pi^{n+1}$ as the quotient map onto $C^{n+1}:=M^{n+1}/[R]^{n+1}$, completing the recursive step of the definition. 

The nullary, unary and binary operations on the new $\omega$-quiver $M$ are defined as in proposition~\ref{prop: free-sd} (there are only the extra arrows coming from $X$ to be considered). Inductively $M$ turns out to be a self-dual reflexive globular $\omega$-magma, the quotient $C=M/R$ by the congruence $R$ is a strict involutive $\omega$-category, since $X\subset R$, and $\pi:M\to C$ is a morphism of self-dual reflexive globular 
$\omega$-magmas. The union of all the maps $[\cdot,\cdot]_n:[X]^n\to M^{n+1}$ is a contraction. 
The inclusion $\nu$ of $Q$ into $U^*((M\xrightarrow{\pi}C), [\cdot,\cdot])$ is simply the map $x\mapsto (x,\varnothing)$ as before. 

We only need to show the universal factorization property. For this purpose, let $(\Phi,\phi)$ be a morphism in $\mathscr{Q}_\omega^*$ into a new self-dual contraction $(\hat{M}\xrightarrow{\hat{\pi}},\hat{C})\in \mathscr{Q}_\omega^*$. If $(\hat{\Phi},\hat{\phi})$ is a morphism in 
$\mathscr{Q}_\omega^*$ such that $U^*(\Phi,\phi)=U^*(\hat{\Phi},\hat{\phi})\circ \nu$, we necessarily have $\hat{\Phi}(x,\varnothing)=\Phi(x)$, for all $x\in Q$. 
Since $\hat{\Phi}$ is a morphism of self-dual reflexive globular $\omega$-magmas, the definition of $\hat{\Phi}$ is uniquely given by 
$\hat{\Phi}^n(z,\gamma)=\Phi^n(z)^{\hat{*}_{\alpha_1}\cdots \hat{*}_{\alpha_m}}$, if $z\in Q^n$ and $\gamma:=(\alpha_1,\dots,\alpha_m)$; 
$\hat{\Phi}^n(z,\gamma)=\hat{\iota}(z)^{\hat{*}_{\alpha_1}\cdots \hat{*}_{\alpha_m}}$, if $z\in M^n_\iota$, for $n\in \mathbb{N}_0$; 
$\hat{\Phi}^n([x,y]_{n-1},\gamma)= \widehat{[\Phi^n(x),\Phi^n(y)]}_{n-1}^{\hat{*}_{\alpha_1}\cdots \hat{*}_{\alpha_m}}$, 
if $[x,y]_{n-1}\in [X]^{n-1}$, for $n\in\mathbb{N}$. 
An inductive argument shows that this unique map $\hat{\Phi}:M\to\hat{M}$ is actually a morphism of self-dual reflexive globular $\omega$-magmas.
The map $\hat{\pi}\circ\hat{\Phi}:M\to\hat{C}$ induces the congruence $R_{\hat{\pi}\circ\hat{\Phi}}$ in $M$ and since $\hat{\Phi}$ preserves the contractions, we have $[X]\subset R_{\hat{\pi}\circ\hat{\Phi}}$ and hence, for the congruence $R$ in M generated by $[X]$, 
$R\subset R_ {\hat{\pi}\circ\hat{\Phi}}$. It follows that there exists a unique induced $*$-functor $\hat{\phi}:C\to\hat{C}$ such that 
$\hat{\pi}\circ\hat{\Phi}=\hat{\phi}\circ \pi$ and hence $(\hat{\Phi},\hat{\phi})$ is the unique morphism in $\mathscr{Q}_\omega^*$ such that 
$\phi=U^*(\hat{\Phi},\hat{\phi})$ and we completed the proof of the universal factorization property. 
\end{proof}

\begin{theorem}
The forgetful functor $U^*:\mathscr{Q}_\omega^*\to \mathscr{G}$ admits a left adjoint $F^*\dashv U^*$. 
\end{theorem}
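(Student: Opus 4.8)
The plan is to build the left adjoint $F^*$ directly out of the free self-dual Penon contraction constructed in the previous proposition, and then verify the adjunction via its universal factorization property rather than by hand-checking the triangle identities. First I would define, for an $\omega$-globular set $Q$, the object $F^*(Q) := (M \xrightarrow{\pi} C, [\cdot,\cdot])$ to be the free self-dual Penon contraction over $Q$ together with its canonical inclusion $\nu_Q : Q \to U^*(F^*(Q))$, whose existence is exactly the content of the preceding proposition. The key observation is that the universal factorization property established there says precisely that $\nu_Q$ is a \emph{universal arrow} from $Q$ to the functor $U^*$: for every self-dual Penon contraction $(\hat M \xrightarrow{\hat\pi} \hat C, \widehat{[\cdot,\cdot]})$ and every morphism of $\omega$-globular sets $\phi : Q \to U^*(\hat M \xrightarrow{\hat\pi}\hat C,\widehat{[\cdot,\cdot]})$, there is a unique $(\hat\Phi,\hat\phi)$ in $\mathscr{Q}_\omega^*$ with $U^*(\hat\Phi,\hat\phi)\circ\nu_Q = \phi$.

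The second step is to promote $F^*$ to a functor. Given a morphism $\psi : Q \to Q'$ of $\omega$-globular sets, I would apply the universal property of $\nu_Q$ to the composite $\nu_{Q'}\circ\psi : Q \to U^*(F^*(Q'))$, obtaining a unique morphism $F^*(\psi) : F^*(Q) \to F^*(Q')$ in $\mathscr{Q}_\omega^*$ with $U^*(F^*(\psi))\circ \nu_Q = \nu_{Q'}\circ\psi$. Functoriality (preservation of identities and composition) then follows from the \emph{uniqueness} clause in the universal property: both $F^*(\mathrm{id}_Q)$ and $\mathrm{id}_{F^*(Q)}$ satisfy the defining equation for $\mathrm{id}_Q$, and similarly $F^*(\psi'\circ\psi)$ and $F^*(\psi')\circ F^*(\psi)$ both satisfy the defining equation for $\psi'\circ\psi$, so they coincide. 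This is the standard argument that a choice of universal arrow for each object assembles into a left adjoint; I would cite~\cite{ML} for the general principle but spell out the bijection
\begin{equation*}
\Hom_{\mathscr{Q}_\omega^*}\big(F^*(Q),\, (\hat M \xrightarrow{\hat\pi}\hat C,\widehat{[\cdot,\cdot]})\big) \;\cong\; \Hom_{\mathscr{G}}\big(Q,\, U^*(\hat M \xrightarrow{\hat\pi}\hat C,\widehat{[\cdot,\cdot]})\big)
\end{equation*}
sending $(\hat\Phi,\hat\phi)$ to $U^*(\hat\Phi,\hat\phi)\circ\nu_Q$, whose inverse is exactly the universal factorization. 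Naturality of this bijection in both variables is again a routine consequence of uniqueness, and the unit of the adjunction is the family $\eta_Q := \nu_Q$.

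I do not expect a serious obstacle here: the hard work has already been done in the construction of the free self-dual Penon contraction, and what remains is the formal bookkeeping of packaging a pointwise universal property into an adjunction. The one point that deserves a little care is checking that $F^*(\psi)$ as defined really is a morphism in $\mathscr{Q}_\omega^*$ (i.e. that its two components $\hat\Phi$ and $\hat\phi$ satisfy the compatibility $\hat\pi\circ\hat\Phi = \hat\phi\circ\hat\pi$ and intertwine the contractions) — but this is automatic, since the universal property of the preceding proposition already produces a morphism \emph{in} $\mathscr{Q}_\omega^*$, not merely a morphism of underlying $\omega$-globular sets. Thus the mildest friction is simply organizing the verification that the family $\{\nu_Q\}_Q$ is natural, which reduces to the uniqueness part of the universal factorization applied twice.
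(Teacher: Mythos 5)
Your proposal is correct and follows essentially the same route as the paper: define $F^*$ on objects via the free self-dual Penon contraction, extend it to morphisms by applying the universal factorization property to $\nu_{Q'}\circ\psi$, and conclude by the standard fact that a family of universal arrows assembles into a left adjoint with unit $\eta_Q=\nu_Q$. The only difference is that you spell out the hom-set bijection and the uniqueness arguments that the paper compresses into ``standard arguments about adjunction.''
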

\begin{proof}
We define $F^*$ on the objects of $\mathscr{G}$ as the map associating to an $\omega$-globular set $Q$ the specific free self-dual Penon contraction $F^*(Q)$ constructed in the previous proposition and  let $\eta_Q:Q\to U^*(F^*(Q))$ denote the ``inclusion'' morphism in the definition of the free self-dual Penon contraction. 

If $\gamma:Q_1\to Q_2$ is a morphism in $\mathscr{G}$, we have that $\eta_2\circ\gamma:Q_1\to U^*(F^*(Q_2))$ is a morphism in $\mathscr{G}$ and hence, by the universal factorization property for free self-dual Penon contractions, there exists a unique morphism 
$F^*_\gamma: F^*(Q_1)\to F^*(Q_2)$ in $\mathscr{Q}_\omega^*$ such that $U^*(F^*_\gamma)\circ \eta_1=\eta_2\circ \gamma$. 

The map $\gamma\mapsto F^*_\gamma$ is functorial from $\mathscr{G}\to\mathscr{Q}_\omega^*$ and, by standard arguments about adjunction, 
$\eta: Q\mapsto \nu_Q$ is the unit of an adjunction $F^*\dashv U^*$.  
\end{proof}
Finally we can provide our main definition: 
\begin{definition}
A \textbf{Penon weak involutive globular $\omega$-category} is defined as an algebra for the monad $(U^*F^*,U^*\epsilon^* F^*,\eta^*)$.
\end{definition}

\subsection{Examples}\label{subsec: ex} 

We just mention here, without entering into a detailed discussion, some of the most immediate examples of involutive weak categories. 

\begin{example}
Every strict involutive globular $\omega$-category is a very particular trivial case of weak involutive globular $\omega$-category. In particular strict globular $\omega$-groupoids.
\end{example}

\begin{example}
Weak $\omega$-groupoids are just special cases of weak involutive $\omega$-categories with involutions given by (suitable composition of) the inverses. In particular the most elementary and well-known examples fitting our definition of weak involutive $\omega$-category are the 
\textbf{fundamental $\omega$-groupoids} $\Pi_\omega(X)$ of topological spaces $X$ (see~\cite[page~xiv-xv]{L2}). 

Let $X$ be a topological spaces, $\Pi_\omega(X)^0:=X$, $\Pi_\omega(X)^1:=C([0,1];X)$ is the set of continuous paths in $X$, $\Pi_\omega(X)^2$ is the set of homotopies of paths with fixed endopoints, \dots, $\Pi_\omega(X)^n$ is the set of homotopies between $(n-1)$-homotopies, etc. 
Compositions of homotopies are defined in the usual way and involutions consist of the inverse homotopies. 
\end{example}

\begin{example}
Truncations, at the level of $n$-arrows, of involutive strict $\omega$-categories are involutive strict $n$-categories and, in the other direction, involutive strict $n$-categories become involutive strict $\omega$-categories, just taking identities as the only morphisms for all $m>n$. The situation for weak categories is more involved: an involutive weak $n$-category can be defined as an algebra for a similar monad associated to the adjunction 
$\xymatrix{\mathscr{Q}^*_n \rtwocell^{F_n}_{U_n} {'\perp}& \mathscr{G}_n}$ between the forgetful functor $U_n$ and its left adjoint functor $F_n$ between the category of $n$-globular sets $\mathscr{G}_n$ and the Penon self-dual contraction category $\mathscr{Q}^*_n$.  
\end{example}

\begin{example}
Globular $\omega$-quivers (and more generally the ``globular'' propagators of globular $\omega$-quivers discussed in~\cite{BJ}) are examples of weak involutive globular $\omega$-categories. 
\end{example}

Of particular motivation for us is the following example of ``higher Morita categories''. 
\begin{example}
Let $\mathscr{M}^0$ be a family of involutive monoids $A,B,C,\dots$ and $\mathscr{M}^1$ the family of the bimodules ${}_AM_B$, with 
$A,B\in \mathscr{M}^0$. Composition $\circ^1_0$ of bimodules is given by the Rieffel tensor product ${}_AM_B \otimes_B {}_BN_C$ and involution $*^1_0$ of bimodules is provided by the Rieffel dual ${}_B\overline{M}_A$ where $\overline{M}:=\{\overline{x} \ | \ x\in M\}$ is just a (specific) disjoint copy of $M$ and the bimodule actions are $b\cdot \overline{x}\cdot a:=\overline{a^*xb^*}$, for all $a\in A$, $b\in B$ and $x\in M$. 
Similarly starting from a class $\mathscr{M}^0$ of strict involutive 1-categories, the family $\mathscr{M}^1$ of ``bimodules'' between them is a weak involutive $1$-category. Introducing a suitable notion of ``bimodule'' between strict involutive $n$-categories, we obtain a weak involutive 
$n$-category. If $\mathscr{M}^0$ is a family of strict $\omega$-categories, the family $\mathscr{M}^1$ of ``bimodules'' between them is a weak involutive $\omega$-category. 
\end{example}

\medskip 

\emph{Notes and Acknowledgments:}
This research was financially supported by the Research Professional Development Project under the Science Achievement Scholarship of Thailand (SAST) and Thammasat University, Faculty of Science and Technology, by Thammasat University research grant n.~2/15/2556: ``Categorical Non-commutative Geometry''. 

\medskip 

The second author thanks Starbucks Coffee at the $1^{\text{st}}$ floor of Emporium Tower and Jasmine Tower, in Sukhumvit, where he spent a significant part of the time dedicated to this research project. 

{\small

}


\begin{thebibliography}{XXXX}

\bibitem[AC]{AC}
Abramsky S, Coecke B (2004)  
A Categorical Semantics of Quantum Protocols 
\textit{Proceedings of the $19^\text{th}$ IEEE Conference on Logic in Computer Science} (LiCS’04) 
\hlink{http://arxiv.org/abs/quant-ph/0402130}{arXiv:quant-ph/0402130 [quant-ph]}

\bibitem[BS]{BS}
Baez J, Stay M (2011)  
Physics, Topology, Logic and Computation: A Rosetta Stone 
\textit{New Structures for Physics} 
\textit{Lecture Notes in Physics} 813:95-172 
Springer  
\hlink{http://www.arxiv.org/abs/0903.0340}{arXiv:0903.0340 [quant-ph]}

\bibitem[BaW]{BaW}
Barr M, Wells C (1995) 
\textit{Category Theory for the Computing Sciences}
Prentice Hall  

\bibitem[BW]{BW} 
Barr M, Wells C (1999)  
\textit{Category Theory Lectures Notes for ESSLLI} 

\bibitem[Ba1]{Ba1}
Batanin M (1998) 
Monoidal Globular Categories as a Natural Environment for the Theory of Weak $n$-categories  
\textit{Adv Math} 136(1):39-103 

\bibitem[Ba2]{Ba2}
Batanin M (2002)  
On the Penon Method of Weakening Algebraic Structures 
\textit{J Pure App Algebra} 172(1):1-23 

\bibitem[Be1]{Be1}
B\'enabou J (1963)  
Cat\'egories Avec Multiplication 
\textit{C R Acad Sci Paris} 256:1887-1890

\bibitem[Be2]{Be}
B\'enabou J (1967)  
Introduction to Bicategories 
\textit{Reports of the Midwest Category Seminar} \hbox{pp 1-77} Springer 

\bibitem[B]{B}
Bertozzini P (2014) 
Categorical Operator Algebraic Foundations of Relational Quantum Theory 
\textit{Proceedings of Science} PoS(FFP14):206  
\hlink{http://arxiv.org/abs/1412.7256}{arXiv:1412.7256 [math-ph]} 

\bibitem[BCL1]{BCL1}
Bertozzini P, Conti R, Lewkeeratiyutkul W (2008)  
Non-commutative Geometry Categories and Quantum Physics 
``Contributions in Mathematics and Applications II'' \textit{East-West J Math} special volume 2007:213-259  
\hlink{http://arXiv.org/abs/0801.2826v2}{arXiv:0801.2826v2 [math.OA]} 

\bibitem[BCL2]{BCL2}
Bertozzini P, Conti R, Lewkeeratiyutkul W (2012) 
Categorical Noncommutative Geometry 
\textit{Journal of Physics: Conference Series} 346:012003 
\hlink{http://arxiv.org/abs/1409.1337}{arXiv:1409.1337 [math.OA]} 

\bibitem[BCL3]{BCL3}
Bertozzini P, Conti R, Lewkeeratiyutkul W (2013) 
Enriched Fell Bundles and Spaceoids
\textit{Noncommutative Geometry and Physics 3}   
G Dito, M Kotani, Y Maeda, H Moriyoshi, T Natsume (eds)  
\textit{Keio COE Lecture Series on Mathematical Science} pp 283-297 World Scientific   
\hlink{http://arxiv.org/abs/1112.5999}{arXiv:1112.5999v1 [math.OA]}

\bibitem[BCLS]{BCLS}
Bertozzini P, Conti R, Lewkeeratiyutkul W, Suthichitranont N (2014)  
\textit{On Strict Higher \hbox{C*-cate}\-gories} manuscript 

\bibitem[BCM]{BCM}
Bertozzini P, Conti R, Dawe Martins R (2014) 
\textit{Involutive Double Categories} manuscript 

\bibitem[BJ]{BJ}
Bertozzini P, Jaffrennou F (2013) 
Remarks on Morphisms of Spectral Geometries 
\textit{East West J Math} 15(1):15-24   
\hlink{http://arxiv.org/abs/1409.1342}{arXiv:1409.1342 [math.OA]} 

\bibitem[BP]{BP} 
Bertozzini P, Puttinrungroj C (2014)  
Hybrid Categories 
\textit{Proceedings of the$19^{\text{th}}$ Annual Meeting in Mathematics} pp 119-128 
(AMM 2014, Thammasat University, 20-22 March 2014, A-One Hotel Pattaya)  

\bibitem[Bo]{Bo}
Borceux F (1994) 
\textit{Handbook of Categorical Algebra I-II-III} 
Cambridge University Press 

\bibitem[BH]{BH}
Brown R, Higgins P (1977)  
Sur les Complexes Crois\'es $\omega$-grupoides et $T$-complexes  
\textit{C R Acad Sci Paris A} 285:997-999 

Brown R, Higgins P (1978) 
Sur les Complexes Crois\'es d'Homotopie Associ\'e \`a Quelques Espaces Filtr\'es 
\textit{C R Acad Sci Paris A} 286:91-93 

Brown R, Higgins P (1981) 
On the Algebra of Cubes 
\textit{J Pure Appl Algebra} 21:233-260 

Brown R, Higgins P (1981) 
The Equivalence of $\omega$-groupoids and Cubical $T$-complexes 
\textit{Cah Topol G\'eom Diff\'er Cat\'eg} 22:349-370 

Brown R, Higgins P (1981)  
The Equivalence of $\infty$-groupoids and Crossed Complexes 
\textit{Cah Topol G\'eom Diff\'er Cat\'eg} 22:371-386  
\footnote{Note the change of terminology: ``$\omega$'' here refers to cubical groupoids (with ``connections'') and ``$\infty$'' to globular groupoids.
Today the two symbols are equivalent and the prefix ``cubical'' / ``globular'' is added.}

\bibitem[BHS]{BHS}
Brown R, Higgins P, Sivera R (2011) 
\textit{Nonabelian Algebraic Topology} 
European Mathematical Society 

\bibitem[Bu]{Bu}
Burgin M (1970) 
Categories with Involution and Correspondences in $\gamma$-categories 
\textit{Trans Moscow Math Soc} 22:181–257 

\bibitem[CL]{CL}
Cheng E, Lauda A (2004) 
\textit{Higher-Dimensional Categories: an Illustrated Guide Book} 
IMA Workshop 

\bibitem[ChM]{ChM} 
Cheng E, Makkai M (2009)  
A Note on Penon Definition of $n$-category 
\textit{Cah Topol G\'eom Diff\'er Cat\'eg} 50(2):83-101 
\hlink{http://arxiv.org/abs/0907.3961}{arXiv:0907.3961 [math.CT]} 

Cheng E, Makkai M (2010)  
A Note on Penon Definition of $n$-category 
\textit{Cah Topol G\'eom Diff\'er Cat\'eg} 51(3):205-223 

\bibitem[Eg]{Eg}
Egger J (2011) 
On Involutive Monoidal Categories 
\textit{Theory Appl Categ} 25(14):368-393 

\bibitem[E1]{E63} 
Ehresmann C (1963)
Cat\'egories Structur\'ee 
\textit{Ann Sci \'E Norm Sup\'er} (3) 8:369-426

\bibitem[E2]{E65}
Ehresmann C (1965)
\textit{Cat\'egories et Structures}
Dunod

\bibitem[EK]{EK} 
Eilenberg S, Kelly GM (1966) 
Closed Categories 
\textit{Proceedings of the Conference on Categorical Algebra (La Jolla 1965)} pp 421-462 
Springer 

\bibitem[EM]{EM}
Eilenberg S, Mac Lane S (1945) 
General Theory of Natural Equivalences 
\textit{Trans Am Math Soc} 58:231-294 

\bibitem[FS]{FS}
Freyd PJ, Scedrov A (1993) 
\textit{Categories, Allegories} 
North Holland 

\bibitem[GLR]{GLR}
Ghez P,  Lima R, Roberts JE (1985) 
\textit{W*-categories} 
\textit{Pacific J Math} 120(1):79-109 

\bibitem[G]{G}
Grothendieck A (1983) 
\textit{Pursuing Stacks} manuscript  \\ 
\hlink{http://pages.bangor.ac.uk/~mas010/pdffiles/stacks.ps}{http://pages.bangor.ac.uk/\~{}mas010/pstacks.html}

\bibitem[K]{K} 
Kachour C (2015)
Algebraic Definition of Weak $(\infty,n)$-categories 
\textit{Theory Appl Categ} 30(22):775-807 
\hlink{http://arxiv.org/abs/1208.0660}{arXiv:1208.0660 [math.KT]} 

\bibitem[Ko]{Ko}
Kondratiev GV (2008) 
Concrete Duality for Strict Infinity Categories
\\ 
\hlink{http://arxiv.org/abs/0807.4256}{arXiv:0807.4256 [math.CT]}
\hlink{http://arxiv.org/abs/math/0608436}{arXiv:math/0608436 [math.CT]}

\bibitem[La]{La}
Lambek J (1999) 
Diagram Chasing in Ordered Categories with Involution 
\textit{J Pure Appl Algebra} 143(1-3):293–307 

\bibitem[L1]{L1}
Leinster T (2002)
A Survey of Definitions of $n$-Category 
\textit{Theory Appl Categ} 10:1-70 
\\
\hlink{http://arXiv.org/abs/math/0107188v1}{arXiv:math/0107188 [math.CT]} 

\bibitem[L2]{L2}
Leinster T (2004)
\textit{Higher Operads, Higher Categories} 
Cambridge University Press 
\\
\hlink{http://arxiv.org/abs/math/0305049}{arXiv:math/0305049 [math.CT]}

\bibitem[M1]{M} 
Mac Lane S (1963)
Natural Associativity and Commutativity
\textit{Rice Univ Studies} 49(4):28-46 

\bibitem[M2]{ML}
Mac Lane S (1998) 
\textit{Categories for the Working Mathematician},
Springer, 1998 

\bibitem[Mi]{Mi}
Mitchener P (2002)
C*-categories 
\textit{Proc Lond Math Soc} 84:375-404 

\bibitem[P]{P} 
Penon J (1999) 
Approche Polygraphique des $\infty$-Categories Non Strictes 
\textit{Cah Topol G\'eom Diff\'er Cat\'eg} 40(1):31-80 

\bibitem[Pu]{Pu} 
Puttirungroj C (2014)
\textit{Hybrid Categories} 
Master Thesis, Thammasat University, July 2014

\bibitem[R]{R}
Roberts JE (1979) 
Mathematical Aspects of Local Cohomology  
\textit{Alg\`ebres d'Op\'erateurs et Leurs Applications en Physique Math\'ematique} 
(Colloquium on Operator Algebras and their Application to Mathematical Physics, Marseille 1977)
\textit{Colloq Internat CNRS} 274:321-332

\bibitem[Se]{S}
Selinger P (2005) 
Dagger Compact Closed Categories and Completely Positive Maps 
\textit{Proceedings of the $3^\text{rd}$ International Workshop on Quantum Programming Languages}   
(Chicago, June 30-July 1, 2005) 

\bibitem[S]{St} 
Street R (1987) 
The Algebra of Oriented Simplexes 
\textit{J Pure Appl Algebra} 49(3):283-335 

\bibitem[U]{H}
The Univalent Foundations Program (2013) 
\textit{Homotopy Type Theory} 
Institute for Advanced Study

\end{thebibliography}
\end{document}